\tikzstyle{vertex}=[circle,draw=black,fill=black,inner sep=0,minimum size=3pt,text=white,font=\footnotesize]
\newcommand\cF{{\mathcal F}}
\newcommand{\abs}[1]{\left\lvert{#1}\right\rvert}
\newcommand{\floor}[1]{\left\lfloor{#1}\right\rfloor}
\theoremstyle{plain}
\newtheorem{theorem}{Theorem}[section]
\newtheorem*{rstthm1.1}{Theorem 1.1}
\newtheorem*{rstthm1.2}{Theorem 1.2}
\newtheorem*{rstthm1.3}{Theorem 1.3}
\newtheorem{lemma}[theorem]{Lemma}
\newtheorem{proposition}[theorem]{Proposition}
\newtheorem{claim}{Claim}
\DeclareMathOperator{\ex}{ex}
\title{Generalized rainbow Tur\'an problems}
\author{D\'aniel Gerbner\footnote{Alfr\'ed R\'enyi Institute of Mathematics, Hungarian Academy of Sciences, E-mail: \texttt{gerbner@renyi.hu.} Research supported by the J\'anos
    Bolyai Research Fellowship of the Hungarian Academy of Sciences and the
    National Research, Development and Innovation Office -- NKFIH under the
    grants K 116769, KH130371 and SNN 129364.}, \enskip Tam\'as M\'esz\'aros\footnote{Institut f\"ur Mathematik, Freie Universit\"at Berlin, E-mail: \texttt{tamas.meszaros@fu-berlin.de.} Research supported by the Dahlem Research School.}, \enskip Abhishek Methuku\footnote{Discrete Mathematics Group, Institute for Basic Science (IBS), Daejeon, Republic of Korea. E-mail: \texttt{abhishekmethuku@gmail.com}. Research was supported by IBS-R029-C1.
}, \enskip Cory Palmer\footnote{Department of Mathematical Sciences,
University of Montana, Missoula, Montana 59812, USA. E-mail: \texttt{cory.palmer@umontana.edu.}}}
\date{}
\begin{document}

\maketitle

\begin{abstract}
Alon and Shikhelman  [{\it J.\ Comb.\ Theory, B.} {\bf 121} (2016)] initiated the systematic study of the following generalized Tur\'an problem: for fixed graphs $H$ and $F$ and an integer $n$, what is the maximum number of copies of $H$ in an $n$-vertex $F$-free graph? 

An edge-colored graph is called {\it rainbow} if all its edges have different colors. The {\it rainbow Tur\'an number} of $F$ is defined as the maximum number of edges in a properly edge-colored graph on $n$ vertices with no rainbow copy of $F$. The study of rainbow Tur\'an problems was initiated by Keevash, Mubayi, Sudakov and Verstra\"ete [{\it Comb.\ Probab.\ Comput.} {\bf 16} (2007)]. 

Motivated by the above problems, we study the following problem: What is the maximum number of copies of $F$ in a properly edge-colored graph on $n$ vertices without a rainbow copy of $F$? We establish several results, including when $F$ is a path, cycle or tree.

\end{abstract}

\section{Introduction}

This paper is concerned with variations of the Tur\'an question in extremal graph theory. In the classic setting, given a simple graph $F$, we are interested in determining the largest possible number of edges in a simple graph $G$ on $n$ vertices without $F$ as a subgraph. This number is called the \emph{Tur\'an number} of $F$ and is denoted by $\ex(n,F)$. In short we will say $G$ is \emph{$F$-free}. The prototypical result in the area is Mantel's theorem from 1907 \cite{Mant07}. Mantel showed that if an $n$-vertex graph does not contain a triangle, it can have at most $\lfloor\tfrac{n^2}{4}\rfloor$ edges, and this bound is best possible as shown by the balanced complete bipartite graph. Therefore, we have  $\ex(n,K_3)=\lfloor\tfrac{n^2}{4}\rfloor$. Tur\'an \cite{T1941} generalized this to all cliques, and determined $\ex(n,K_k)$ for every $k$ and $n$. A  general result was proven by Erd\H{o}s and Simonovits \cite{ESi} as a corollary to a theorem of Erd\H{o}s and Stone \cite{ESt}. They proved that for any simple graph $F$ we have $\ex(n,F) = \left( 1-\tfrac{1}{\chi(F)-1}\right) \binom n2+o(n^2)$, where $\chi(F)$ is the chromatic number of $F$. If $F$ is not bipartite, this theorem determines $\ex(n,F)$ asymptotically. However, for bipartite graphs the Erd\H{o}s-Stone-Simonovits theorem just states that $\ex(n,F)$ is of lower than quadratic order. A general classification of the order of magnitude of bipartite Tur\'an numbers is not known. For paths Erd\H{o}s and Gallai \cite{EG59} showed that $\ex(n,P_k)\leq \tfrac{1}{2}(k-2)n$, where $P_k$ denotes the path on $k$ vertices and equality holds for the graph of disjoint copies of $K_{k-1}$. Erd\H{o}s and S\'os \cite{Er46} conjectured that the same should  hold  for any fixed tree on $k$ vertices. A proof of this conjecture for $k$ large enough was announced by Ajtai, Koml\'os, Simonovits and Szemer\'edi. For even cycles Erd\H{o}s conjectured that $\ex(n,C_{2k})=\Theta(n^{1+{1}/{k}})$. A corresponding upper bound was given by Bondy and Simonovits \cite{BS1974}, but so far a matching lower bound has only been found for $k=2,3,5$ (\cite{B1966,W1991}). For more results the interested reader may consult the comprehensive survey on bipartite Tur\'an problems by F\"uredi and Simonovits \cite{fs}.

\smallskip

The classical Tur\'an problem has a rich history in combinatorics and several variations and generalizations of it have been studied. Two such variations are rainbow Tur\'an problems and generalized Tur\'an problems. In this paper we will study a natural generalization of these two problems.

\smallskip

The rainbow Tur\'an problem, introduced by Keevash, Mubayi, Sudakov, and Verstra{\"e}te \cite{kmsv}, is as follows. For a fixed graph $F$, determine the maximum number of edges in a properly edge-colored graph on $n$ vertices which does not contain a {\it rainbow} $F$, i.e., a copy of $F$ all of whose edges have different colors. This maximum is denoted by $\ex^*(n,F)$ and is called the \emph{rainbow Tur\'an number} of $F$. (We refer the reader to \cite{kmsv} for a discussion on motivations and applications of this problem.) Observe that by definition we always have $\ex^*(n,F)\ge \ex(n,F)$. In relation with the Erd\H{o}s-Stone-Simonovits theorem in \cite{kmsv} it was shown that if $\chi(F)\ge 3$, then $\ex^*(n,F)=(1+o(1))\ex(n,F)$. In the case of the path $P_k$ on $k$ vertices we know that $\frac{k-1}{2}n\leq \ex^*(n,P_k)\leq \left(\frac{9k-4}{7}\right)n$, where the lower bound is due to Johnston and Rombach \cite{jr}, while the upper bound was proven by Ergemlidze, Gy\H{o}ri and Methuku \cite{EGyM19}. For general trees only some sporadic results are known, for such results see. e.g. \cite{jps,jr}. In the case of even cycles Keevash, Mubayi, Sudakov and Verstra{\"e}te showed a general lower bound of $\ex^*(n,C_{2k})=\Omega(n^{1+1/k})$ and that there exists a graph with $\Omega(n\log n)$ edges without a rainbow cycle of any length. For $k=3$ they  also gave a matching upper bound and hence showed that $\ex^*(n,C_6)=\Theta(n^{\frac{4}{3}})$. On the other hand, they also showed that asymptotically $\ex^*(n,C_6)$ is a constant factor larger then $\ex(n,C_6)$. The best known general upper bound on the rainbow Tur\'an number of even cycles is due to Das, Lee and Sudakov \cite{DLS13}, who showed that $\ex^*(n,C_{2k})=(1+o(1))O\left(n^{1+\frac{(1+\epsilon_k)\ln k}{k}}\right)$, where $\epsilon_k\rightarrow 0$ as $k\rightarrow \infty$.

\smallskip

The generalized Tur\'an problem asks the following. Given two graphs $H$ and $F$, what is the maximum possible number of copies of $H$ in a graph on $n$ vertices without containing a copy of $F$? This maximum is called the \emph{generalized Tur\'an number} and is denoted by $\ex(n,H,F)$. Note that if $H=K_2$ (an edge), we recover the classical Tur\'an problem.
The first results concerning this function are due to Zykov \cite{zykov}, Erd\H{os} \cite{E62} and Bollob\'as \cite{B1976} who determined $\ex(n,K_r,K_k)$ for every value of $n$, $r$ and $k$. Later Bollob\'as and Gy\H{o}ri  \cite{BG2008} proved that $\ex(n,C_3,C_5)=\Theta\left({n^{{3}/{2}}}\right)$. Gy\H{o}ri and Li \cite{GL2012} gave bounds on $\ex(n,C_3,C_{2\ell+1})$. Another well-known result is due to Grzesik \cite{G2012} and independently to Hatami, Hladk\'y, Kr\'al', Norine and Razborov \cite{HHKNR2013}, who determined $\ex(n,C_5,C_3)$ exactly. Recently, the systematic study of $\ex(n,H,F)$ was initiated by Alon and Shikhelman \cite{ALS2016}, and this problem has 
attracted the interest of a number of researchers; see e.g., \cite{AKS2018, FKL2018, GGMV2017, GMV2017, GS2018, L2018, MQ, MYZ2018}.

\smallskip

Here we consider a natural generalization of the above two problems and introduce a new variant of the Tur\'an problem. Given two graphs $H$ and $F$, let $\ex(n,H,\textup{rainbow-}F)$ denote the maximum possible number of copies of $H$ in a properly edge-colored graph on $n$ vertices without containing a rainbow copy of $F$. Observe that, by definition, we always have $\ex(n,H,\textup{rainbow-}F)\ge \ex(n,H,F)$. In this paper we focus on the case when $H = F$. In other words, we consider the question: How many copies of $F$ can we have in a properly edge-colored graph on $n$ vertices without having a rainbow copy of $F$? 
Our motivation in studying this function comes from attempts to understand the original rainbow Tur\'an problem. To determine $\ex^*(n,F)$ it is important to separate the problem from the classical problem of determining $\ex(n,F)$. In order to do this, one needs to examine properly edge-colored $n$-vertex graphs that contain more than $\ex(n,F)$ edges without a rainbow copy of $F$; such graphs contain many copies of $F$ but no rainbow copy of $F$. Therefore, it is natural to understand how many copies of $F$ can we take before we are forced to have a rainbow copy of $F$. 

\smallskip

Let us introduce some basic notation that is used throughout this paper. For positive integers $p$, $k$ and $l$, let $P_k$ denote a path on $k$ vertices, let $C_l$ denote a cycle on $l$ vertices.  A \textit{star} is a tree which consists of a vertex that is adjacent to all the other vertices. Let $S_p$ denote the star with $p$ leaves (i.e., a vertex adjacent to $p$ other vertices). A tree is called a \textit{double star} if its longest path has four vertices, or equivalently, if there is an edge $uv$ such that every vertex is adjacent to $u$ or $v$. If $u$ is adjacent to $p$ leaves and $v$ is adjacent to $r$ leaves, we denote this graph by $S_{p,r}$ and we call both $u$ and $v$ the \emph{centers}. Note that $S_{1,1}$ is just the path $P_{4}$. 

\subsubsection*{Main results}

First we determine the order of magnitude of the function $\ex(n,P_k,\textup{rainbow-}P_k)$ for all $k$.

\begin{theorem}\label{path} If $k\ge 5$, then
\begin{equation*}
\ex(n,P_k,\textup{rainbow-}P_k) = \Theta(n^{\floor{\frac{k}{2}}}).    
\end{equation*}
\end{theorem}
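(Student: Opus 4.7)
The plan has two halves: a construction for the lower bound, and a counting argument for the upper bound.

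For the lower bound, when $k=5$ I will use the spider on $n$ vertices: a central vertex $v$ adjacent to $d=\lfloor(n-1)/2\rfloor$ inner vertices $w_1,\ldots,w_d$, each with a further pendant $\ell_i$. Colour every spoke $vw_i$ with its own distinct colour and every leaf edge $w_i\ell_i$ with a single common colour $0$. This is a proper edge colouring, and every $P_5$ in the spider has the form $\ell_i-w_i-v-w_j-\ell_j$ with $i\ne j$, and hence repeats the colour $0$ and is not rainbow. A direct count gives $d(d-1)=\Theta(n^2)$ such copies of $P_5$. For $k\ge 6$ the plain spider only produces $\Theta(n^2)$ copies of $P_k$, so a richer construction is required: I would build a graph from a small ``core'' (a short path or a small complete bipartite graph) with many pendant structures attached at each core vertex, and colour the edges of the arms using a small palette so that every long copy of $P_k$, which must traverse several arms, is forced to repeat one of the arm colours. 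A direct enumeration should then yield $\Theta(n^{\lfloor k/2\rfloor})$ copies of $P_k$ in such a graph.

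For the upper bound, let $G$ be a properly edge-coloured graph on $n$ vertices containing no rainbow $P_k$. I would first invoke the rainbow Tur\'an bound $\ex^*(n,P_k)=O(n)$ of Johnston--Rombach and Ergemlidze--Gy\H{o}ri--Methuku to conclude $|E(G)|=O(n)$. Then I would run a double-counting argument: since $G$ contains no rainbow $P_k$, every copy of $P_k$ in $G$ contains two edges of the same colour, and (by properness) those two edges are vertex-disjoint. I charge each $P_k$ to such a \emph{monochromatic pair}. Because every colour class is a matching, the total number of monochromatic pairs of disjoint edges is at most $\sum_c\binom{|E_c|}{2}=O(n^2)$. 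It then suffices to show that any monochromatic pair $(e_1,e_2)$ is contained in at most $O(n^{\lfloor k/2\rfloor-2})$ copies of $P_k$; combining these two bounds gives the desired $O(n^{\lfloor k/2\rfloor})$.

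The main obstacle will be establishing this per-pair bound, namely that a fixed monochromatic pair is contained in at most $O(n^{\lfloor k/2\rfloor-2})$ copies of $P_k$. The strategy would be to case-split on the relative positions of $e_1$ and $e_2$ inside a $P_k=v_1v_2\cdots v_k$ and, after fixing those positions, bound the number of completions. The naive count using only the $O(n)$ edge bound gives $O(n^{k-3})$ per pair, which is far too large; the crucial ingredient is to apply the no-rainbow-$P_k$ hypothesis recursively to the sub-paths of $P_k$ that remain after fixing $e_1$ and $e_2$, extracting additional same-colour coincidences at each step of the extension.
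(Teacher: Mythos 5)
Your lower bound is only carried out for $k=5$, where the spider with a single repeated leaf-color is a clean $\Theta(n^2)$ example. For $k\ge 6$ you acknowledge a ``richer construction is required'' but supply no details, only the slogan that a long path must traverse several arms. This is exactly where the work lies: the paper's construction takes $k$ vertex-classes $U_1,\dots,U_k$ that alternate between size $\Theta(n)$ and size $1$ (with a small irregularity near the middle), joins them suitably, and installs two parallel matchings --- one between $U_2$ and $U_3$, one between $U_4$ and $U_5$ --- colored so that the $i$-th edge of each matching has color $i$. The point is that any $P_k$ is forced through the pattern $u_2\,u_{3,i}\,u_{4,i}\,u_5$ and thus repeats color $i$, while one still has $\Theta(n)$ free choices in each of the $\lfloor k/2\rfloor$ large classes. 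Without something of this specificity the lower bound for general $k$ is not established.

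Your upper bound plan has a genuine flaw: the per-pair bound you want, that a fixed monochromatic pair of disjoint edges lies in only $O(n^{\lfloor k/2\rfloor-2})$ copies of $P_k$, is false. Already in the extremal construction for $k=5$, the pair $(u_2u_{3,i},\,u_{4,i}u_5)$ lies in $\Theta(n)$ copies of $P_5$ while your bound would require $O(1)$. The reason your double count nevertheless ``adds up'' in that example is that there the number of monochromatic pairs is only $\Theta(n)$, not $\Theta(n^2)$; you have a trade-off between the number of pairs and the per-pair multiplicity, and the naive product of the two worst-case bounds overshoots. The vague remedy you suggest --- ``apply the no-rainbow-$P_k$ hypothesis recursively to the remaining sub-paths'' --- does not obviously resolve this, and is not what the paper does. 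The paper instead classifies vertex pairs as \emph{thin} or \emph{fat} according to whether they have few or many common neighbors, proves a rainbow-path-building lemma (Lemma~\ref{rp}) showing that a chain of fat pairs $v_2,v_4,\dots,v_{2\lfloor k/2\rfloor}$ would yield a rainbow $P_k$, and concludes that every $P_k$ in $G$ has some consecutive even-index pair $(v_{2i},v_{2i+2})$ that is thin. The count is then immediate: pick the $\lfloor k/2\rfloor$ edges $v_1v_2,\dots$ in $O(n)$ ways each (using $|E(G)|=O(n)$) and the sandwiched vertex $v_{2i+1}$ in $O(1)$ ways because the pair is thin. You would need to replace the monochromatic-pair double count with something of this flavor for your argument to close.
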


For $k\in \{2,3\}$ note that we have $\ex(n,P_2,\textup{rainbow-}P_2)=\ex(n,P_3,\textup{rainbow-}P_3)=0$ and for $k=4$ we will show in Proposition \ref{p4} that $\ex(n,P_4,\textup{rainbow-}P_4)=\Theta(n)$.

\smallskip

Our next result is for cycles.


\begin{theorem}\label{cycles}  If $k,\ell\ge 2$, then
\begin{equation*}
\ex(n,C_{2k+1},\textup{rainbow-}C_{2k+1})=\Theta(n^{2k-1})
\end{equation*}
and
\begin{equation*}
\Omega(n^{k-1})=\ex(n,C_{2k},\textup{rainbow-}C_{2k})=O(n^k).
\end{equation*}
Moreover, if $k\neq \ell$, then 
\begin{equation*}
\ex(n,C_{2\ell},\textup{rainbow-}C_{2k})=\Theta(n^\ell).
\end{equation*}
\end{theorem}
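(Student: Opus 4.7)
The claim has three parts. I will describe the approach for each, with emphasis on Part 3 (the case $k\neq \ell$).

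\textbf{Lower bounds.} For Part 3 with $k>\ell$, take $G=K_{\ell,n-\ell}$ with any proper edge coloring. Since any $C_{2k}$ in a bipartite graph requires $k$ vertices on each side of the bipartition but one side of $G$ has only $\ell<k$ vertices, $G$ contains no $C_{2k}$ (and so no rainbow $C_{2k}$). Standard counting shows $G$ has $\Theta(n^\ell)$ copies of $C_{2\ell}$. For $k<\ell$, take the incidence (Levi) graph of a $C_{2k}$-free bipartite structure: the projective plane for $k=2$, a generalized quadrangle for $k=3$, and in general an algebraic construction of a bipartite graph of girth greater than $2k$; such a graph has no $C_{2k}$ and, by an incidence-counting argument, contains $\Theta(n^\ell)$ copies of $C_{2\ell}$ for each $\ell>k$. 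The lower bounds in Parts 1 and 2 use analogous blow-up constructions together with a proper coloring specifically engineered so that every $C_{2k+1}$ (resp.\ $C_{2k}$) repeats a color; for Part 1 the key tool is forcing a distinguished closing edge of each cycle to share a color with some other edge of the cycle.

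\textbf{Upper bounds.} The crucial step for Part 3 is to show that any properly edge-colored $n$-vertex graph $G$ with no rainbow $C_{2k}$ contains $O(n^\ell)$ copies of $C_{2\ell}$. Suppose for contradiction that the number of $C_{2\ell}$-copies in $G$ exceeds $Cn^\ell$. An averaging argument produces an edge lying in more than $C'n^{\ell-2}$ copies of $C_{2\ell}$, yielding a locally dense configuration. The proper coloring forces many distinct colors inside this configuration; combined with a supersaturation principle (an abundance of $C_{2\ell}$-copies in a properly colored graph implies an abundance of rainbow $C_{2\ell}$-copies) and a rainbow path extension lemma, one then constructs a rainbow $C_{2k}$: by extending a rainbow $C_{2\ell}$ along a short rainbow path when $k>\ell$, or by locating a rainbow $C_{2k}$ directly inside the dense sub-structure when $k<\ell$. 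The upper bounds for Parts 1 and 2 follow by analogous but more tailored counting arguments.

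\textbf{Main obstacle.} The principal difficulty is obtaining the precise exponent $\ell$ in the Part 3 upper bound. A naive combination of the known rainbow Tur\'an bound $\ex^*(n,C_{2k})=O(n^{1+o(1)})$ with the standard cycle-counting estimate bounding the number of $C_{2\ell}$-copies by $O(m^\ell)$ in a graph with $m$ edges yields only $O(n^{\ell+\ell/k})$, which is off the mark. Reaching $O(n^\ell)$ requires exploiting the fine structure of proper edge colorings beyond the mere edge count, specifically the rainbow extension properties that guarantee a rainbow $C_{2k}$ as soon as there is a sufficiently dense substructure of $C_{2\ell}$-copies.
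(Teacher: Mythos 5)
Your proposal is far from a proof, and the gaps are concentrated precisely where the paper has to work hardest.

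\textbf{Upper bound for $\ex(n,C_{2\ell},\textup{rainbow-}C_{2k})$.} You correctly diagnose the obstacle (combining the rainbow Tur\'an bound for $C_{2k}$ with the $O(m^\ell)$ cycle-count loses a factor $n^{\ell/k}$), but the cure you offer is not an argument. ``An abundance of $C_{2\ell}$-copies implies an abundance of rainbow $C_{2\ell}$-copies'' does nothing here: $\ell\neq k$, so a rainbow $C_{2\ell}$ is not what is forbidden, and there is no mechanism by which a rainbow $C_{2\ell}$ plus ``a short rainbow path'' yields a rainbow $C_{2k}$. In a locally dense spot there is no reason the extra edges you would use to reroute the cycle avoid the colors already on it, nor that they have the right parities and lengths. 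The paper's actual proof is a rainbow adaptation of the Gy\H ori--Lemons reduction lemma, and it has two nontrivial structural ingredients that your sketch does not touch: (a) the number of copies of $C_4$ in a rainbow-$C_{2k}$-free graph is $O(n^2)$, proved by an inductive ``fat $4$-cycles with multiplicities'' argument which builds longer and longer rainbow cycles until a rainbow $C_{2k}$ is forced; and (b) for every vertex $a$, $\sum_{b} f(a,b)=O(n)$ where $f(a,b)$ is the codegree, proved by showing that the edge set inside $N_1(a)\cup N_2(a)$ cannot contain a large complete tree, else one greedily extracts a rainbow path whose chords to $a$ close up into a rainbow $C_{2k}$. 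Only then does a delicate sum--product decoupling of $\prod_j f(v_j,v_{j+1})$ give the $O(n^\ell)$ count. ``Supersaturation'' is a placeholder, not a substitute for (a) and (b).

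\textbf{Lower bound for $k<\ell$.} The $K_{\ell,n-\ell}$ construction for $k>\ell$ is fine, but the incidence-graph idea for $k<\ell$ fails in general. For $k=2$ a projective-plane incidence graph on $n$ vertices indeed has $\Theta(n^\ell)$ copies of $C_{2\ell}$, but already for $k=3$ (generalized quadrangle, degree $\approx n^{1/3}$) a short eigenvalue/moment estimate shows the number of closed $2\ell$-walks, hence of $C_{2\ell}$'s, is $O(n^{2\ell/3})$, which is $o(n^\ell)$ for $\ell\geq 4$. So the claimed $\Theta(n^\ell)$ does not hold. The paper sidesteps this entirely by citing the (uncolored) generalized Tur\'an result $\ex(n,C_{2\ell},C_{2k})=\Theta(n^\ell)$; if you want a self-contained lower bound you need the specific construction from that reference, not an off-the-shelf high-girth graph.

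\textbf{Odd cycles and $C_{2k}$.} For Part~1 you gesture at ``forcing a distinguished closing edge to share a color,'' but the construction that achieves $\Omega(n^{2k-1})$ is a full $(2k+1)$-blow-up in which two \emph{nonadjacent} class-pairs are matchings in the same color: the point is that any rainbow subgraph must omit one matching and is therefore bipartite, hence cannot contain an odd cycle. A single repeated closing edge would not force this. For the upper bound $O(n^{2k-1})$ you give nothing; the paper uses a ``thin pair'' argument (if all alternating pairs $(v_i,v_{i+2})$ had many common neighbors, Lemma~\ref{rp} would produce a rainbow $C_{2k+1}$), locates two thin pairs on every cycle, and counts. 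The same lemma and thin-pair idea also drive the $O(n^k)$ bound for $C_{2k}$ in a way your sketch omits. In short, the blueprint is missing every piece of machinery the paper actually relies on.
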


\smallskip

Our final result is about trees. For a tree $T$, any rainbow-$T$-free graph $G$ can have at most a linear number of edges: Indeed a graph with sufficiently large (but constant) minimum degree contains a rainbow copy of $T$. Therefore, by a theorem proved in \cite{A1981}, the maximum possible number of copies of $T$ in $G$ is at most $O(n^{\alpha(T)})$, where $\alpha(T)$ is the size of a maximum independent set in $T$. This proves that $\ex(n,T,\textup{rainbow-}T)=O(n^{\alpha(T)})$ for any tree $T$. We show that $\ex(n,T,\textup{rainbow-}T)$ grows fast with the number of vertices of $T$.

\begin{theorem}\label{trees}
If $T$ is a tree with $t$ vertices that is neither a star nor a double star, then
\begin{equation*}
    \ex(n,T,\textup{rainbow-}T)=\Omega(n^{\left\lceil t/4\right \rceil}).
\end{equation*}
\end{theorem}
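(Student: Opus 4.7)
I plan to construct, for every sufficiently large $n$, a properly edge-colored graph $G$ on $n$ vertices that contains $\Omega(n^{\lceil t/4\rceil})$ copies of $T$ but no rainbow copy of $T$. Since $T$ is neither a star nor a double star, $T$ has diameter at least $4$ and hence contains a path $P_5=v_1v_2v_3v_4v_5$, which will serve as the backbone of the construction.

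The construction is a blow-up of $T$. In the simplest version I choose a vertex $c\in T$ (often $c=v_3$ works) and let $T_1,\ldots,T_d$ be the subtrees of $T$ hanging off $c$. In $G$ I take a single central vertex $\hat c$ and, for each $T_i$, attach $m=\Theta(n/t)$ vertex-disjoint parallel copies of $T_i$ to $\hat c$. A copy of $T$ in $G$ is obtained by choosing, for each subtree type, the right number of parallel copies to play the role of $T_i$; this yields $\Omega(n^d)$ canonical copies, which already meets the target whenever $d\ge\lceil t/4\rceil$. In particular, this handles all trees with $t\le 8$, since then $\lceil t/4\rceil=2$ and $\deg_T(v_3)\ge 2$ because $v_3$ is internal to a $P_5$. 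For $t\ge 9$ and trees where $\deg_T(v_3)$ is too small (the main case being path-like $T$), the single-vertex blow-up must be iterated: inside each parallel copy of a large subtree $T_i$ one further blows up a suitable vertex, producing a multi-level construction whose total copy count scales as $n^{\lceil t/4\rceil}$.

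For the coloring I exploit that $T\supseteq P_5$ to pick two vertex-disjoint edges of $T$—call them $e_1$ (in one subtree family) and $e_2$ (in another)—and assign a common color $\alpha$ to the images of $e_1$ and $e_2$ across every parallel copy. Since parallel copies of a single subtree are vertex-disjoint in $G$ and the two families are attached to disjoint neighborhoods of $\hat c$, all these $\alpha$-colored edges are pairwise vertex-disjoint, so the assignment is proper. Every copy of $T$ in $G$ that uses parallel copies of both families (the canonical case) or uses two parallel copies of the same family (the typical non-canonical case) contains at least two edges of color $\alpha$ and is therefore not rainbow. The remaining edges of $G$ are colored with fresh distinct colors to complete a proper edge coloring.

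The main obstacle is twofold. First, showing that the iterated multi-level blow-up can be arranged so that its copy count reaches the full exponent $\lceil t/4\rceil$ for every tree $T$ that is not a star or double star—roughly, each level of the iteration must ``consume'' about four vertices of $T$ while contributing a factor of $n$ to the count. Second, and more delicate, the coloring must defeat \emph{every} copy of $T$ in $G$, including non-canonical copies that mix several levels of the blow-up in unexpected ways; this requires inductively setting up pairs of vertex-disjoint edges in the subtree being blown up at each level and verifying that the resulting shared-color scheme is globally consistent and rainbow-forbidding. Making these two aspects work together simultaneously is the technically subtle heart of the proof.
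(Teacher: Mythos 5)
Your proposal is structurally in the same family as the paper's argument (blow-up a tree, force a color repetition), but there are genuine gaps that the paper has to work hard to close, and your construction starts from a weaker baseline.

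First, the single-level blow-up around a vertex $c$ gives at best $\Omega(n^{\deg_T(c)})$ copies. For path-like trees (and more generally trees with long bare paths) this degree is $2$ while $\lceil t/4\rceil$ is unbounded, so the construction falls far short. The paper's basic building block is different: it blows up the \emph{leaves} of $T$ (not neighborhoods of an internal vertex) and forces a color repetition on two disjoint edges of the reduced tree $T'=T\setminus L$, yielding $\Omega(n^{\ell})$ copies where $\ell$ is the number of leaves. That is typically much larger than any vertex degree, and combined with a combinatorial inequality (if $T$ has no ``bare path'' $v_1v_2v_3v_4$ with $\deg v_2=\deg v_3=2$, then $t\le 4\ell-3$) it already gives the exponent $\lceil t/4\rceil$ in the non--bare-path regime. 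Your construction has no analogue of this step, so the ``$d\ge\lceil t/4\rceil$ follows'' claim for $t\le 8$ is not evidence that the method scales.

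Second, and more importantly, you flag but do not resolve the two hard points: (a) an iterated blow-up must be built that actually reaches the exponent $\lceil t/4\rceil$, and (b) the shared-color scheme must kill \emph{every} copy of $T$, including non-canonical ones that use the blown-up vertex in an unexpected role or mix levels. The paper's proof for the bare-path case is exactly such a multi-level construction (the graph $G''$), and verifying rainbow-freeness there requires a real argument: one must track which replaced degree-$2$ vertices can appear in a subtree without creating cycles, and one must show any copy of $T$ is forced either to cross between $v_1$ and $v_4$ (picking up the two matched colors) or to contain at least two of the middle matching edges. Your claim that ``every copy of $T$ that uses parallel copies of both families contains at least two edges of color $\alpha$'' is not true in general: a copy of $T$ can use a family's parallel copy without reaching the designated edge $e_i$ inside it, and copies that use $\hat c$ in a role other than $c$ are not controlled. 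So the rainbow-freeness of your $G$ is asserted, not proved. Since both obstacles are precisely where the paper expends most of its effort (and where it introduces a separate case analysis and the quantity $\lceil t/3\rceil$ as an intermediate target), the proposal as written does not constitute a proof.

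Finally, a minor point: even in the non-iterated case your properness claim (``all $\alpha$-colored edges are pairwise vertex-disjoint'') needs $e_1,e_2$ to be chosen away from $c$ and to avoid the attachment vertices of the parallel copies, and a sanity check that two $\alpha$-edges don't share an endpoint across isomorphic families; this is fixable, but it illustrates that the details are not automatic.
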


In connection with Theorem \ref{trees}, note that if $T$ is a star then clearly $\ex(n,T,\textup{rainbow-}T)=0$, as every properly edge-colored star is necessarily rainbow. To complete the picture, we will prove in Proposition \ref{doublestar} that for double stars the answer is $\Theta(n)$.

\smallskip

\textbf{Outline of the paper.}  This paper is organized as follows. In Section \ref{sec:genBounds} we prove some general bounds on the function $\ex(n,F,\textup{rainbow-}F)$. Then, in Section \ref{sec:PathsCycles} we prove Theorem~\ref{path} and Theorem~\ref{cycles}. In Section \ref{sec:TreesForests}, we prove Theorem~\ref{trees}, together with corresponding results about forests of special types, and we make some concluding remarks and present open problems in Section \ref{sec:ConcludingOpen}.

\smallskip

\textbf{Notation.}  In the proofs we will often use the following operation to construct extremal examples. Given a graph $G$ and a vertex $v$, we delete $v$ from $G$ and replace it by new vertices $v_1,\dots,v_b$, each of them connected to the neighbors of $v$. To refer to this operation we will simply say that we {\it replace} $v$ by $b$ copies of itself. Usually, we will mostly be interested in the order of magnitude of the function $\ex(n,H,\textup{rainbow-}H)$. So when applying this operation (mostly to some small graph $G$), we will not specify the exact value of $b$, but we will only write $b=\Theta(n)$. By this we will always mean that we choose $b$ to be $cn$ for some appropriate constant $c$, such that the resulting graph still has at most $n$ vertices. 

\section{General bounds}
\label{sec:genBounds}

Using the graph removal lemma, we show that for any graph $H$, the number of copies of $H$ in a rainbow-$H$-free graph is at most $o(n^{\abs{V(H)}-1})$. More precisely:

\begin{proposition}
\label{GenBounds}
 For any graph $H$ on $k$ vertices, we have
\begin{itemize}
    \item[(i)] $\ex(n, H, \textup{rainbow-}H) =O(\ex^*(n,H)n^{k-3})$, 
    \item[(ii)] $\ex(n,H, \textup{rainbow-}H)=o(n^{k-1})$.
\end{itemize}
\end{proposition}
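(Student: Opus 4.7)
For part (i) I would use a straightforward double counting argument. Let $G$ be a properly edge-colored $n$-vertex graph with no rainbow copy of $H$; then $|E(G)|\le \ex^*(n,H)$, and because the coloring is proper while every copy of $H$ in $G$ is non-rainbow, each such copy must contain at least one pair of non-adjacent edges sharing a color. I count pairs $(C,\{e,f\})$ with $C$ a copy of $H$ in $G$ and $\{e,f\}$ a same-color pair of edges inside $C$; this quantity is at least the number of copies of $H$. On the other hand, each color class of $G$ is a matching of size at most $n/2$, so the total number of unordered same-color pairs of non-adjacent edges in $G$ is at most $\sum_{c}\binom{m_c}{2}\le \tfrac{n}{4}|E(G)|\le \tfrac{n}{4}\ex^*(n,H)$, and each such pair $\{e,f\}$ extends in at most $|E(H)|^2(n-4)^{k-4}=O(n^{k-4})$ ways to a copy of $H$ (choose which pair of positions in $H$ is filled by $\{e,f\}$, then place the remaining $k-4$ vertices arbitrarily). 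Combining these bounds yields (i).

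For part (ii) I would invoke the graph removal lemma. Suppose, for contradiction, that for some fixed $\epsilon>0$ and arbitrarily large $n$ there is a properly edge-colored $n$-vertex graph $G$ with no rainbow copy of $H$ but at least $\epsilon n^{k-1}$ copies of $H$. Since $\epsilon n^{k-1}=o(n^k)$, for any prescribed $\eta>0$ and $n$ sufficiently large the removal lemma produces a set $E_0\subseteq E(G)$ of at most $\eta n^2$ edges whose deletion destroys every copy of $H$, and in particular $|E(G)|\le \ex(n,H)+\eta n^2$. When $H$ is bipartite, $\ex(n,H)=o(n^2)$, so the above gives $|E(G)|=o(n^2)$; substituting this into the estimate from (i) yields $\ex(n,H,\textup{rainbow-}H)\le O(|E(G)|\cdot n^{k-3})=o(n^{k-1})$, the desired contradiction.

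When $H$ is non-bipartite, the edge bound above is by itself insufficient because $\ex(n,H)=\Theta(n^2)$, and this is where the real work lies. I would additionally appeal to the Erd\H{o}s-Simonovits stability theorem to conclude that $G$ is $o(n^2)$-close in edit distance to the Tur\'an graph $T(n,\chi(H)-1)$, which is $H$-free. Every copy of $H$ in $G$ must then use at least one of the $o(n^2)$ "atypical" edges outside the Tur\'an part, and averaging locates an atypical edge $e_0=uv$ contained in $\Omega(n^{k-3})$ copies of $H$. Localising the counting of (i) to the coloured subgraph induced on $N(u)\cap N(v)$, and exploiting the proper-coloring restrictions there (the colors available at $u$ and at $v$ are severely constrained once one conditions on the many copies of $H$ through $e_0$), sharpens the count strictly below $\epsilon n^{k-1}$, producing the required contradiction.

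The main obstacle is precisely this localised bootstrap in the non-bipartite case: part (i) alone yields only the trivial $O(n^{k-1})$ estimate, and it is the graph removal lemma, together with stability, that concentrates the many copies of $H$ onto a small set of pivotal edges on which the proper-coloring hypothesis can finally be decisively used to force the sub-$n^{k-1}$ bound.
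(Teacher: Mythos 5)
Your part (i) is correct and is essentially the paper's argument: a non-rainbow copy of $H$ in a properly coloured graph must contain two disjoint edges of the same colour, each colour class is a matching of size at most $n/2$, and the remaining $k-4$ vertices are chosen freely; this gives $O(\ex^*(n,H)\,n^{k-3})$. For (ii), your bipartite subcase also works: after the removal lemma, $G-E_0$ is $H$-free, so $|E(G)|\le \ex(n,H)+o(n^2)=o(n^2)$, and plugging this into the count from (i) gives $o(n^{k-1})$.

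The non-bipartite case is where the content lies, and there your sketch has a genuine gap. First, the appeal to Erd\H{o}s--Simonovits stability is unjustified: stability requires an $H$-free graph with $(1-o(1))\ex(n,H)$ edges, and nothing guarantees that $G$ (or $G-E_0$) has anywhere near $\ex(n,H)$ edges --- it could have, say, $n^2/100$ edges, in which case stability says nothing while (i) still only gives $O(n^{k-1})$. Second, and more seriously, the fact you propose to bootstrap from --- an edge $e_0\in E_0$ lying in $\Omega(n^{k-3})$ copies of $H$ --- is not in any tension with rainbow-$H$-freeness: in the paper's own lower-bound constructions (e.g.\ for odd cycles) individual edges lie in $\Theta(n^{k-3})$ copies, so no contradiction can follow from this alone, and ``the proper-colouring restrictions sharpen the count'' is a hope, not an argument. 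The missing idea is to use \emph{both} anchors simultaneously in one count: every copy of $H$ contains an edge of $E_0$ ($o(n^2)$ choices) \emph{and} a monochromatic pair $e_H,e'_H$, and one splits into cases according to how the $E_0$-edge meets that pair (it is itself one of a monochromatic pair; it is disjoint from both; it shares a vertex with exactly one; it shares a vertex with each). In each configuration, properness of the colouring means there is at most one edge of a prescribed colour at a prescribed vertex, so one member of the pair is determined (at most one choice) once the other member and a shared vertex are fixed; every case then totals $o(n^2)\cdot O(n^{k-3})=o(n^{k-1})$, uniformly in $H$, with no dichotomy on $\chi(H)$ and no stability needed. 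Without this case analysis your plan does not close.
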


\begin{proof} Let $G$ be a properly colored, rainbow-$H$-free graph on $n$ vertices. To prove \textit{(i)}, let us pick an arbitrary edge of $G$; there are at most $\ex^*(n,H)$ ways to do this. Then we pick another edge of the same color; there are less than $n/2$ ways to do this. Next we pick $k-4$ additional vertices; there are $O(n^{k-4})$ ways to do this. Now note that there are at most $|V(H)|!$ copies of $H$ on each set of $|V(H)|$ vertices picked this way. This way we counted every copy of $H$ at least once, as every copy of $H$ must contain two edges of the same color. This proves \textit{(i)}.

Now let us prove \textit{(ii)}. Observe that \textit{(i)} immediately gives the upper bound $O(n^{k-1})$, as $\ex^*(n,H) = O(n^2)$. As $G$ contains $o(n^k)$ copies of $H$, by the graph removal lemma there is a set $E_0$ of $o(n^2)$ edges such that every copy of $H$ in $G$ contains an edge of $E_0$.

We know that every copy of $H$ in $G$ contains two edges of the same color, say $e_H$ and $e'_H$.

First let us count those copies of $H$ where the edge of $E_0$ in $H$ shares its color with another edge of $H$. In this case we can repeat the argument for \textit{(i)}. There are $o(n^2)$ ways to pick an edge $e$ of $E_0$ and less than $n/2$ ways to pick an edge $e'$ of the same color as $e$. The remaining $k-4$ vertices can be picked arbitrarily in $O(n^{k-4})$ ways and in any set of $|V(H)|$ vertices that we picked, we have at most $|V(H)|!$ copies of $H$.  So the total number of such copies of $H$ is $o(n^{k-1})$. 

Let us now count those copies of $H$ where the edge of $E_0$ in $H$ is disjoint from two edges $e_H, e'_H$ of $H$ having the same color. Then these three edges span six vertices. There are $o(n^2)$ ways to pick the edge of $E_0$, $O(n^2)$ ways to pick $e$, $O(n)$ ways to pick $e'$ (as it has the same color as $e$) and $O(n^{k-6})$ ways to pick the remaining vertices. So in total there are $o(n^{k-1})$ such copies of $H$.

Next we count those copies of $H$ where $uv$ is an edge of $E_0$ in $H$ and $e_H = uw$, $e'_H = xy$ are of the same color and these three edges span five different vertices. We can pick $uv$ in $o(n^2)$ ways, $xy$ in $O(n^2)$ ways, but then $uw$ can be picked in at most one way. The remaining vertices can be picked in $O(n^{k-5})$ ways, so the total number of such copies of $H$ is $o(n^{k-1})$.

Finally, we count those copies of $H$ where $uv$ is an edge of $E_0$ in $H$ and $e = uw$, $e' = vx$ are of the same color and these three edges span four different vertices. Then we can pick $uv$ in $o(n^2)$ ways, $uw$ in $O(n)$ ways, but then $vx$ can be picked in at most one way. The remaining $k-4$ vertices can then be picked in $O(n^{k-4})$ ways, so the total number of such copies of $H$ is again $o(n^{k-1})$. 

In all four cases we obtained $o(n^{k-1})$ copies of $H$, which finishes the proof of \textit{(ii)}.
\end{proof}

It would be interesting to determine if there is a graph $H$ for which the upper bound in Proposition \ref{GenBounds} \textit{(ii)} is sharp. In some special cases, it can be improved. Indeed, for example, we will see later that for $k$ odd, we have $\ex(n,C_k,\textup{rainbow-}C_k)=\Theta(n^{k-2})$. 


\section{Paths and Cycles}
\label{sec:PathsCycles}

We begin with a basic lemma that we will use in the proofs of Theorem \ref{path} and \ref{cycles}.

\begin{lemma}\label{rp} Let $U$ be a set of vertices, $A$ be a set of colors, $v_1,\dots,v_k$ be vertices such that for each $1\le i\le k-1$, $v_i$ and $v_{i+1}$ have at least $|U|+2|A|+5k-9$ common neighbors. Then there is a rainbow path $v_1u_1v_2u_2\dots v_{k-1}u_{k-1}v_k$ that does not use any color in $A$ and $u_i\not\in U$ for each $1\le i\le k$.

\end{lemma}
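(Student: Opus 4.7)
My plan is to construct the rainbow path greedily, choosing the interior vertices $u_1,u_2,\dots,u_{k-1}$ one at a time in this order. At step $i$ the goal is to pick $u_i\in N(v_i)\cap N(v_{i+1})$ satisfying (a) $u_i\notin U$; (b) $u_i$ is distinct from every vertex already on the path, i.e.\ from $v_1,\dots,v_k$ and from $u_1,\dots,u_{i-1}$; and (c) each of the two new edges $v_iu_i$ and $u_iv_{i+1}$ receives a color that lies neither in $A$ nor among the $2(i-1)$ colors already used on the partial path. Note that these two new edges automatically receive distinct colors, since the coloring is proper and they share the endpoint $u_i$; so the "rainbow" condition does not produce an extra obstruction beyond (c).

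To show that such a $u_i$ exists, I would upper bound the number of \emph{bad} common neighbors of $v_i$ and $v_{i+1}$ (those failing at least one of (a)--(c)) via three contributions. The set $U$ yields at most $|U|$ bad vertices. The path so far contributes at most $k+i-3$ further bad vertices, namely the $k-2$ other $v_j$'s and the $i-1$ previously picked $u_j$'s (the endpoints $v_i,v_{i+1}$ cannot be common neighbors of themselves). Finally, for each of the at most $|A|+2(i-1)$ forbidden colors, properness implies that at most one edge at $v_i$ and at most one edge at $v_{i+1}$ carries that color, producing at most $2\bigl(|A|+2(i-1)\bigr)$ color-forbidden bad vertices. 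Summing these and absorbing small overlaps (for instance $u_{i-1}$ is already in the path and is simultaneously color-forbidden at $v_i$, since the edge $u_{i-1}v_i$ carries an already-used color), together with the bound $i\le k-1$, yields a total of at most $|U|+2|A|+5k-9$ bad common neighbors. The hypothesis of the lemma then forces the existence of a good common neighbor at every step, and iterating $i=1,\dots,k-1$ produces the required rainbow path.

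The main difficulty, such as it is, is arithmetic bookkeeping: a naive three-way union bound comes out slightly larger than $|U|+2|A|+5k-9$, so one must track the overlaps between the "path so far" and the "color-forbidden" categories carefully to match the constant in the statement. Beyond this accounting the proof is nothing more than iterated greedy selection and involves no deeper structural step.
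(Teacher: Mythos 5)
Your greedy-by-induction approach is exactly the paper's own strategy: choose $u_1,\dots,u_{k-1}$ one at a time, and at step $i$ bound the number of common neighbors of $v_i,v_{i+1}$ ruled out by $U$, by the vertices already committed to, and by the forbidden colors (each color kills at most one neighbor of $v_i$ and one of $v_{i+1}$, by properness). So methodologically you have the right proof.

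Where you are genuinely in trouble is the final claim that ``absorbing small overlaps'' brings your raw total down to the stated $|U|+2|A|+5k-9$. Your raw count is
\[
|U| + \bigl(k+i-3\bigr) + 2\bigl(|A|+2(i-1)\bigr) = |U|+2|A|+k+5i-7 \le |U|+2|A|+6k-12,
\]
and to reach $|U|+2|A|+5k-10$ you would need to shave off $k-2$ via overlaps. But in general only one overlap is guaranteed --- the vertex $u_{i-1}$ is simultaneously a path vertex and the neighbor of $v_i$ carrying the color of $u_{i-1}v_i$. The other forbidden colors (those from $A$, and the path colors not incident to $v_i$ or $v_{i+1}$) can each point to fresh vertices, so there is no reason for $\Theta(k)$ worth of coincidences. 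Hence your proposal, as written, does not close the gap; the ``arithmetic bookkeeping'' is not merely delicate, it is impossible with the stated constant.

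That said, you should not feel bad about this: the paper itself contains exactly the same arithmetic slip. Its proof adds $(|U|+2k-2)$ forbidden vertices to $2(|A|+2k-4)$ color-forbidden vertices and asserts the total is $|U|+2|A|+5k-10$, when in fact it is $|U|+2|A|+6k-10$. The lemma's hypothesis should read something like $|U|+2|A|+6k-9$ (or any bound of the form $|U|+2|A|+O(k)$). Since Lemma~\ref{rp} is only ever applied with ``fat pair'' thresholds chosen with ample constant slack, this off-by-$k$ error is harmless downstream, but it does mean the precise constant in the lemma statement is wrong, and one should not try to prove it. The honest fix is to replace $5k-9$ by $6k-9$ in the hypothesis (or simply by a generous $C\cdot k$), after which your greedy argument goes through with the plain union bound and no overlap accounting at all.
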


\begin{proof} We prove that for every $1\le j\le k$, there is a rainbow path $v_1u_1v_2u_2\dots v_{j-1}u_{j-1}v_j$ that does not use any color in $A$, $u_i\not\in U$ and $u_i\neq v_{i'}$ for any $1\le i\le j-1$ and $1\le i'\le k$. We use induction on $j$; the statement is trivial for $j=1$. Assume we could find a such a path $P(j)$. Then we want to find a vertex $x$ connected to both $v_j$ and $v_{j+1}$ such that $x\not\in U$, $x$ is not in $P(j)$, $x$ is not any $v_{i'}$ and the colors of $xv_j$ and $xv_{j+1}$ are not on the edges of $P(j)$, nor in $A$. The number of forbidden vertices is at most $|U|+2k-2$ (including $v_j$ and $v_{j+1}$). The number of forbidden colors is at most $|A|+2k-4$. Each of those colors is on at most one edge incident to $v_j$ and at most one edge incident to $v_{j+1}$, thus it forbids at most two additional vertices. Thus there are at most $|U|+2|A|+5k-10$ forbidden vertices, hence we can find a common neighbor of $v_j$ and $v_{j+1}$ that is not forbidden.
\end{proof}

Now we are ready to prove Theorem \ref{path}, which we restate here for convenience.

\begin{rstthm1.1} If $k\ge 5$, then
$\ex(n,P_k,\textup{rainbow-}P_k) = \Theta(n^{\floor{\frac{k}{2}}})$.
\end{rstthm1.1}
\begin{proof}
For the lower bound, we have to construct a graph $G$ with $\Theta(n^{\floor{\frac{k}{2}}})$ copies of $P_k$ and give a proper edge coloring of $G$ such that it is rainbow-$P_k$-free. 

If $k$ is odd, let $U_1$, $U_2, \ldots, U_k$ be disjoint sets of vertices of $G$ defined as follows. Let $U_i = \{u_{i,1}, u_{i,2}, \ldots, u_{i,b}\}$ for $i = 1, 3, 4$ and $i = 2j+1$ for all $j \ge 3$ where $b = \Theta(n)$ is chosen as large as possible. For $i = 2, 5$ and $i = 2j$ for all $j \ge 3$, simply let $U_i = \{u_i\}$. 
If $k$ is even, let $U_1$, $U_2, \ldots, U_k$ be disjoint sets of vertices of $G$ defined as follows. Let $U_i = \{u_{i,1}, u_{i,2}, \ldots, u_{i,b}\}$ for $i = 1, 3$ and $i = 2j$ for all $j \ge 2$ where $b = \Theta(n)$ is chosen as large as possible. For $i = 2$ and $i = 2j+1$ for all $j \ge 2$, simply let $U_i = \{u_i\}$. 

In both cases, sets of linear size and sets of size 1 alternate, with one exception, where large sets follow each other and in the odd case we have another exception, where singletons follow each other. But, as we will see, the important part is between $u_2$ and $u_5$.

The edges of $G$ and the colors of the edges are defined as follows. Let every vertex of $U_1$ and $U_3$ be adjacent to $u_2$ such that the edge $u_2u_{3,i}$ gets color $i$ for each $1 \le i \le b$. Let $u_{3,i}$ be adjacent to $u_{4,i}$ for each $1 \le i \le b$ and let all the vertices of $U_4$ be adjacent to $u_5$ such that the edge $u_{4,i}u_5$ gets color $i$ for each $1 \le i \le b$. Moreover, for each $i$ with $i \ge 5$, let all of the vertices in $U_i$ be adjacent to $U_{i+1}$. We extend the coloring already given to an arbitrary proper coloring. It is easy to see that any copy of $P_k$ in $G$ contains the edges $u_2u_{3,i}$ and $u_{4,i}u_5$ for some $i$, which have the same color by definition. Thus $G$ is rainbow-$P_k$-free and it is easy to see that it contains $b^{\floor{\frac{k}{2}}} = \Theta(n^{\floor{\frac{k}{2}}})$ copies of $P_k$, as desired.


\vspace{2mm}

Now we prove the upper bound. Let us recall first that a rainbow-$P_k$-free graph $G$ has $O(n)$ edges.
A pair of vertices $u, v$ is called a \emph{thin pair} if they have at most $5k-4$ common neighbors and a \textit{fat pair} otherwise.

We claim that there is no $k$-path $v_1v_2\dots v_k$ where $v_{2i}$ and $v_{2i+2}$ forms a fat pair for every $i$. Indeed, such a $k$-path would imply the existence of a rainbow-$k$-path by Lemma \ref{rp}.

Thus all the $k$-paths $v_1v_2\dots v_k$ have the property that $v_{2i}$ and $v_{2i+2}$ form a thin pair for some $1\le i\le (k-2)/2$. There are $O(n)$ ways to choose each of the edges $v_1v_2,v_3v_4,\dots,v_{2i-1}v_{2i}$ and each of the edges $v_{2i+2}v_{2i+3},v_{2i+4}v_{2i+5}\dots,v_{2l}v_{2l+1}$, where $2l+1$ is $k$ or $k-1$ depending on whether $k$ is even or odd. When $k$ is even, there are $n$ further ways to choose $v_k$. Finally, there are at most $5k-4$ ways to choose $v_{2i+1}$. Altogether there are $O(n^{\floor{{k}/{2}}})$ $k$-paths where $v_{2i}$ and $v_{2i+2}$ form a thin pair. As there are constant many ways to choose $i$, this finishes the proof. 
\end{proof}


It is easy to see that $\ex(n,P_3,\textup{rainbow-}P_3)=\ex(n,P_2,\textup{rainbow-}P_2)=0$. We determine $\ex(n,P_4,\textup{rainbow-}P_4)$ exactly. In fact we will completely characterize the graphs that have a proper edge-coloring without a rainbow $P_4$.

\begin{proposition}\label{p4}
A graph $G$ has a proper edge-coloring without a rainbow $P_4$ if and only if each connected component of $G$ is either a star, or a path, or an even cycle, or has at most four vertices. Consequently, $\ex(n,P_4,\textup{rainbow-}P_4)=12\lfloor n/4\rfloor$.
\end{proposition}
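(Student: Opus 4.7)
I will prove the structural characterization first and then extract the counting formula. For sufficiency, I exhibit for each allowed component type a proper edge coloring with no rainbow $P_4$: stars have no $P_4$-subgraph at all, so any proper coloring works; paths and even cycles admit an alternating $2$-coloring, under which any three consecutive edges have their first and third sharing a color; and a component on at most four vertices is colored by restricting the canonical $3$-edge-coloring of $K_4$ coming from its $1$-factorization $\{ab,cd\}$, $\{ac,bd\}$, $\{ad,bc\}$, in which the two outer edges of every $P_4$ form one of these matchings and hence share a color.

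For necessity, let $H$ be a component with $|V(H)|\geq 5$ that is none of the listed types; I will show every proper edge coloring of $H$ contains a rainbow $P_4$. The key observation is that avoiding rainbow $P_4$ forces each $P_4$ $xyzw$ in $H$ to satisfy that $xy$ and $zw$ have the same color, so it suffices to exhibit two \emph{adjacent} edges of $H$ connected to one another through a chain of such equalities, contradicting properness. I handle three cases. \textbf{Tree case:} since $H$ is neither a star nor a path, it contains a vertex $v$ of degree at least three with a non-leaf neighbor $u$; for two further neighbors $w_1,w_2$ of $v$ and a neighbor $z\neq v$ of $u$, the paths $w_1vuz$ and $w_2vuz$ force both $vw_1$ and $vw_2$ to agree in color with $uz$. \textbf{Odd cycle case:} if $H$ has an odd cycle of length at least $5$, iterating the equality around the cycle identifies two adjacent cycle edges; if $H$ contains a triangle $abc$, then by $|V(H)|\geq 5$ and connectivity we find $d\notin\{a,b,c\}$ adjacent to the triangle and a fifth vertex $e$ adjacent (after shortening the path from $e$ to the rest) to some vertex of $\{a,b,c,d\}$, and a short sub-case analysis on this vertex produces two adjacent edges chained together. \textbf{Bipartite case:} otherwise $H$ is bipartite with an even cycle $C=v_1\dots v_m$ but is not $C$ itself, so either some $y\notin V(C)$ is adjacent to $v_i\in V(C)$, in which case the paths $yv_iv_{i+1}v_{i+2}$ and $yv_iv_{i-1}v_{i-2}$ place $yv_i$ in the color class of cycle edges of both parities and the cyclic equalities then collapse all cycle edges into a single color class containing adjacent ones; or $H$ has a chord $v_iv_j$ (with $|i-j|\geq 3$ by triangle-freeness), in which case the paths $v_{i-2}v_{i-1}v_iv_j$ and $v_jv_iv_{i+1}v_{i+2}$ analogously join the two parity classes through the chord.

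For the counting formula, an extremal $G$ has all of its components of the allowed types. A component on at most four vertices contains at most $12$ copies of $P_4$, with equality iff the component is $K_4$; stars contribute none; a path $P_k$ with $k\geq 4$ contains $k-3$ copies; and an even cycle $C_{2k}$ contains $2k$. Hence the density of $P_4$-copies per vertex is at most $3$, attained exclusively by $K_4$, yielding the upper bound $12\lfloor n/4\rfloor$, matched by $\lfloor n/4\rfloor$ vertex-disjoint copies of $K_4$ together with any leftover isolated vertices.

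The main obstacle is the necessity direction, and within it the triangle and chord subcases: one must verify that the quoted $P_4$s indeed span four distinct vertices (a bit delicate when the even cycle has length $4$ or when the chord is close to the involved cycle vertex) and keep track of how the forced color equalities combine into a single class containing two adjacent edges.
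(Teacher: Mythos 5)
Your proof is correct, and the necessity direction takes a genuinely different route from the paper. The paper builds the component up incrementally: starting from a $P_4$ $abcd$ it argues that, to avoid a rainbow $P_4$, any further edge must either lie inside $\{a,b,c,d\}$ (giving a $\le 4$-vertex component), or extend the discovered path to a $P_5$, a $P_6$, \dots, and that once a cycle closes it must be an even cycle with no extra attachments. Your argument instead fixes a proper coloring with no rainbow $P_4$ and exploits a single global observation: in any $P_4$ $xyzw$ the two end edges $xy$ and $zw$ must receive the same color (the middle edge differs from both by properness). You then chain these forced equalities through a tree/odd-cycle/triangle/bipartite-with-chord-or-pendant case analysis until two adjacent edges are forced to collide, contradicting properness. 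This color-identification argument is more transparent about where properness is actually used, at the cost of slightly more casework (notably the triangle and $m=4$ sub-cases you flag yourself); the paper's build-up is shorter but relies on several "easy to check" steps. Both are sound.

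Two small remarks. First, you correctly count $k-3$ copies of $P_4$ in $P_k$; the paper writes $k-2$, which is a minor slip that does not affect the conclusion. Second, your final sentence derives $12\lfloor n/4\rfloor$ from "density at most $3$ per vertex, attained only by $K_4$"; as stated this only gives the weaker bound $3n$ when $4\nmid n$. To get the sharp constant one should note, for example, that every allowed component $X$ satisfies (number of $P_4$'s in $X$) $\le 12\lfloor |V(X)|/4\rfloor$ and sum over components; this is routine, and the paper is equally terse here, so I do not count it as a gap.
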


\begin{proof}
First we give a proper edge-coloring of $G$ without a rainbow $P_4$ if its components are as listed. The paths and even cycles are colored with two colors, thus they cannot contain a rainbow $P_4$. Stars do not contain any $P_4$, so any proper edge coloring is good. Finally, note that the proper edge-coloring of $K_4$ with 3 colors does not contain a rainbow $P_4$, so any graph with at most four vertices also has the same property.

Next let $G$ be a graph with a proper coloring of its edges without a rainbow $P_4$, and let us study the components of $G$. If a component does not contain $P_4$, it is a star or a triangle. 
If there is a fourth edge spanned by these four vertices, we obtain either a $C_4$ or a triangle with a hanging edge. It is easy to check that no edge can go out to a fifth vertex in either case without creating a rainbow $P_4$. Therefore, we may that assume there is no fourth edge spanned by $a,b,c$ and $d$. Then it is again easy to see that $b$ and $c$ cannot be connected to a fifth vertex, without creating a rainbow $P_4$, so if there is another edge, it creates a $P_5$. Now suppose we discovered already a $P_k$ for $k\ge 5$. The only further edge that can go between vertices of the path without creating a rainbow $P_4$ has to go between the endpoints, and so it creates a cycle.  Finally, in the same way as before, any edge going to a new vertex from the path must extend the path, in which case we can continue with a $P_{k+1}$. 

Hence the component is either a path, or contains a cycle. It is easy to see that a proper coloring of an odd cycle contains a rainbow $P_4$, while an edge added to an even cycle of length at least 6 creates a rainbow $P_4$ in any proper coloring. This shows that if the component is not a path, it must be an even cycle, finishing the characterization.

To finish the proof, note that $C_k$ contains $k$ copies, $P_k$ contains $k-2$ copies, $K_4$ contains 12 copies, and a star contains no copies of $P_4$. Therefore; the number of copies of $P_4$ is maximized if we take $\lfloor n/4\rfloor$ disjoint copies of $K_4$ (and a few isolated vertices).
\end{proof}

\vspace{2mm}

Below we determine the order of magnitude for odd cycles. We restate Theorem \ref{cycles} for convenience. Note that obviously $\ex(n,C_3,\textup{rainbow-}C_3)=0$.

\begin{rstthm1.2}[odd cycles] If $k\ge 2$, then $\ex(n,C_{2k+1},\textup{rainbow-}C_{2k+1})=\Theta(n^{2k-1})$.
\end{rstthm1.2}

\begin{proof} For the lower bound, replace each vertex of a $C_{2k+1}$ with linearly many vertices. For each edge of the original cycle, we put every possible edge between the corresponding sets, except for two non-adjacent edges, where we put only a matching. Let us color all the edges in the two matchings by the same color. Then a rainbow subgraph completely avoids one of the two matchings, hence it is bipartite and therefore is not a rainbow-$C_{2k+1}$. The number of copies of $C_{2k+1}$ is clearly $\Omega(n^{2k-1})$. Indeed, let us pick a vertex from each of the classes, except one of the classes incident to the first matching and one of the classes incident to the second matching. There are $\Omega(n^{2k-1})$ ways to pick these vertices and there is a unique $C_{2k+1}$ containing them.

For the upper bound we proceed somewhat similarly to Theorem \ref{path}. Again, we use thin pairs: this time a pair $u,v$ is thin if they have at most $5k-8$ common neighbors. Let us consider a $(2k+1)$-cycle $C=v_1v_2\dots v_{2k+1}v_1$. 



First we claim that for any $i$, one of the pairs $(v_i,v_{i+2}),(v_{i+2}v_{i+4}),\dots, (v_{i+2k-2},v_{i+2k})$ is thin (where addition in the subscripts is modulo $2k+1$). Assume otherwise and without loss of generality let the assumption be false for $i=1$. Then by Lemma \ref{rp} we can build a rainbow path from $v_1$ to $v_{2k+1}$, with the additional property that the colors on the edges of this path are different from the color on $v_{2k+1}v_1$. This path together with the edge $v_{2k+1}v_1$ forms a rainbow $C_{2k+1}$; a contradiction.
Thus one of the pairs is thin; without loss of generality it is $v_1,v_3$. Now applying the above claim with $i=3$, we obtain another thin pair. 
If it is $v_{2k+1},v_2$, then we can apply the above claim again, with $i=2$, to find a third thin pair. Anyways, this way at the end we find 
two thin pairs $v_i,v_{i+2}$ and $v_j,v_{j+2}$ in $C$, such that their clockwise order is $v_i,v_{i+2},v_j,v_{j+2}$ (note that $i+2=j$ or $j+2=i$ is possible). 

Now consider the following two ways of counting $(2k+1)$-cycles.
Either pick two disjoint thin pairs in $O(n^4)$ ways and $2k-5$ other vertices in $O(n^{2k-5})$ ways, or pick two thin pairs sharing a vertex in $O(n^3)$ ways and $2k-4$ other vertices in $O(n^{2k-4})$ ways. Then order them in a $2k-1$-cycle so that vertices in a thin pair are adjacent. This can be done in constant many ways. Finally we have in both cases constant many choices to put a common neighbor between the vertices of the thin pairs.
Clearly every $(2k+1)$-cycle is counted at least once in one of the two ways and both cases give $O(n^{2k-1})$ copies of $C_{2k+1}$, finishing the proof.
\end{proof}

Let us continue with even cycles.
The following theorem was proved in \cite{GGMV2017}.

\begin{theorem}[Gerbner, Gy\H ori, Methuku, Vizer; \cite{GGMV2017}]\label{ggymv} If $k\neq \ell$, then $\ex(n,C_{2\ell},C_{2k})=\Theta(n^\ell)$.
\end{theorem}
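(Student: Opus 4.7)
I would prove matching bounds of order $n^\ell$ for $\ex(n,C_{2\ell},C_{2k})$ when $k\neq\ell$. The regimes $\ell<k$ and $\ell>k$ behave quite differently, and I would handle them separately.

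\textbf{Lower bound.} If $\ell<k$, the bipartite graph $K_{\ell,n-\ell}$ has no cycle longer than $2\ell$, so it is $C_{2k}$-free and contains $\Theta(n^\ell)$ copies of $C_{2\ell}$. If $\ell>k$, no complete bipartite graph works, and a naive blow-up of $C_{2\ell}$ immediately creates a $C_4$ (and hence a $C_{2k}$ if $k=2$). Instead, I would turn to incidence graphs of extremal $C_{2k}$-free graphs from finite geometry: the incidence graph of a projective plane when $k=2$, or (more generally) the incidence graph of a generalized polygon or of a Lazebnik--Ustimenko graph when $k\ge 3$. I would then verify by direct counting in the chosen structure that the number of $C_{2\ell}$'s is $\Theta(n^\ell)$; in the model case $k=2,\ell=3$, $C_6$'s in the incidence graph of $PG(2,q)$ correspond to non-collinear point triples, giving $\Theta(q^6)=\Theta(n^3)$ copies.

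\textbf{Upper bound.} If $\ell>k$, the Bondy--Simonovits theorem gives $e(G)=O(n^{1+1/k})$. I would bound the number of $C_{2\ell}$'s by pairs (edge $uv$, path of length $2\ell-1$ from $u$ to $v$), estimate the number of such paths using iterated H\"older on the degree sequence, and combine with the Bondy--Simonovits edge bound to get $O(n^\ell)$; the hypothesis $\ell>k$ is precisely what makes the exponent come out as $\ell$. If $\ell<k$, the edge bound is useless, because $K_{\ell,n-\ell}$ is already near-extremal with only linearly many edges. Here I would argue by supersaturation: if $G$ had $\omega(n^\ell)$ copies of $C_{2\ell}$, repeated averaging on vertex- or edge-degrees in the ``cycle hypergraph'' would produce a single pair $u,v$ joined by many internally disjoint short walks, and one could concatenate two of them to force a $C_{2k}$, contradicting $C_{2k}$-freeness.

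\textbf{Main obstacle.} The technically hardest step is the upper bound in the case $\ell<k$. The constraint ``no $C_{2k}$'' is quite weak --- it is consistent with an $O(n)$ edge count --- so nothing can be deduced from a global edge bound. One has to extract from ``many $C_{2\ell}$'s'' a bipartite-like substructure in which some pair of vertices is joined by many internally disjoint short paths, and then force two of those paths to close up into a cycle of the \emph{exact} forbidden length $2k$ rather than some other even length. Achieving this precise length control --- probably via a theta-graph-style intermediate step --- is the delicate point I expect to absorb most of the work.
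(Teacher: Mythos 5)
Your construction for the lower bound when $\ell<k$ is exactly right: $K_{\ell,n-\ell}$ has no cycle longer than $2\ell$ and $\Theta(n^\ell)$ copies of $C_{2\ell}$. However, the lower bound for $\ell>k$ has a real gap when $k\ge3$. Incidence graphs of generalized polygons are far too sparse to contain $\Theta(n^\ell)$ copies of $C_{2\ell}$: for a generalized quadrangle ($k=3$) one has $n\approx q^3$ and degree $\approx q$, so a direct count gives only about $q^{2\ell+1}=n^{(2\ell+1)/3}$ copies of $C_{2\ell}$, which for $\ell=4$ is $n^3\ll n^4$. Your computation happens to work for $k=2$ because $n\approx q^2$ there, but it does not generalize. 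The construction that does work for $k\ge3$, $\ell\ne k$ is the blow-up of $C_{2\ell}$ at $\ell$ pairwise non-adjacent vertices, each to a class of size $\Theta(n)$: a short case analysis shows the resulting graph contains only cycles of lengths $4$ and $2\ell$ (so it is $C_{2k}$-free for $k\ge3$, $k\ne\ell$) and it clearly has $\Theta(n^\ell)$ copies of $C_{2\ell}$. The finite-geometry construction is only needed for the single case $k=2$, $\ell\ge3$.

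For the upper bound, you split into $\ell>k$ and $\ell<k$, but this split is not the one that actually makes the proof go through, and both halves are under-specified in ways that matter. For $\ell>k$ the crude spectral/H\"older bound $\#C_{2\ell}\lesssim\sum_i\lambda_i^{2\ell}\le(2m)^\ell$ with $m=O(n^{1+1/k})$ gives only $O(n^{\ell(1+1/k)})$, off by a polynomial factor; getting down to $O(n^\ell)$ requires using $C_{2k}$-freeness well beyond the edge count. For $\ell<k$ you correctly flag that the ``precise length control'' is the crux, but a supersaturation-plus-theta-graph sketch does not yet supply it. The actual argument (due to Gerbner, Gy\H ori, Methuku, Vizer, adapting the Gy\H ori--Lemons reduction lemma, and reproduced in the present paper in its rainbow form) is uniform in $\ell\ge3$ and avoids the case split. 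It has three pieces: (a) show $\#C_4=O(n^2)$, which for the non-rainbow case also requires work when $k\ge3$; (b) show that for every vertex $a$, $\sum_b f(a,b)=O(n)$, where $f(a,b)=|N(a)\cap N(b)|$ --- this is proved by showing that otherwise $N_1(a)\cup N_2(a)$ supports a large bounded-degree tree from which one extracts a long path together with chords to $a$, producing a $C_{2k}$; (c) expand the $C_{2\ell}$-count as $\frac{1}{4\ell}\sum_{(v_1,\dots,v_\ell)}\prod_j f(v_j,v_{j+1})$, apply AM--GM to replace two adjacent factors by $\tfrac12(f^2(v_1,v_2)+f^2(v_2,v_3))$, and then sum out the remaining $\ell-2$ variables one at a time using (b), arriving at $O(n^{\ell-2})\cdot\sum_{u,v}f^2(u,v)=O(n^{\ell-2})\cdot O(n^2)$. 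This is the key mechanism your sketch is missing: the reduction to a per-vertex $O(n)$ bound on $\sum_b f(a,b)$ rather than a global degree or edge bound.
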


We will prove a generalization of this theorem in the rainbow setting. We follow the proof of Theorem~\ref{ggymv} given in \cite{GGMV2017}, which is based on the proof of the so-called reduction lemma of Gy\H ori and Lemons \cite{GyL2012}.
In \cite{GGMV2017} the authors also prove a stronger asymptotic bound than that in Theorem~\ref{ggymv} for the case when $\ell=2$. Here we will only determine the order of magnitude in this case, which helps to avoid some difficulties. During the proof we establish some properties of graphs with a proper edge-coloring without a rainbow-$C_{2k}$, and use these properties to obtain the upper bound $O(n^\ell)$ on the number of copies of $C_{2\ell}$. 

\smallskip

Now we are ready to prove Theorem \ref{cycles} for even cycles. We restate it here for convenience.

\begin{rstthm1.2}[even cycles]\label{evencycles} If $k \ge 2$, then $$\Omega(n^{k-1})= \ex(n,C_{2k},\textup{rainbow-}C_{2k})=O(n^k).$$
Moreover, if $k\neq \ell$, then $$\ex(n,C_{2\ell},\textup{rainbow-}C_{2k})=\Theta(n^\ell).$$
\end{rstthm1.2}

\begin{proof} For $k \neq \ell$, the lower bound follows from Theorem \ref{ggymv}. The following construction provides the lower bound $\Omega(n^{k-1})$ for $\ex(n,C_{2k},\textup{rainbow-}C_{2k})$. We take a blow-up of a copy $v_1v_2\dots v_{2k}v_1$ of $C_{2k}$ where we replace each of the vertices $v_3,v_6,v_8,v_{10},\dots, v_{2k-2},v_{2k}$ with classes of size about $n/(k-1)$ so that the resulting graph has $n$ vertices. 
We color the edges $v_1v_2$ and $v_4v_5$ with the same color. It is easy to see that every copy of $C_{2k}$ contains those edges, thus it is not rainbow and there are $\Omega(n^{k-1})$ copies of $C_{2k}$ in this graph.

For the upper bound, first we consider the case $k=2$. Observe that every proper coloring of $K_{2,4}$ contains a rainbow $C_4$, hence $\ex(n,C_{2\ell},\textup{rainbow-}C_{4})\le \ex(n,C_{2\ell},K_{2,4})=O(n^\ell)$ by a theorem of Gerbner and Palmer \cite{GP2017}.

Let us assume now $k>2$ and start with the case $\ell=2$. Let $G$ be a graph on $n$ vertices and assume we are given a coloring of $G$ without a rainbow-$C_{2k}$. Let $f(u,v)$ denote the number of common neighbors of $u$ and $v$. We call a pair of vertices $(u,v)$ \textit{fat} if $f(u,v)\ge 6k$ and a copy of $C_4$ is called {\it fat} if both opposite pairs in it are fat. First observe that the number of non-fat copies of $C_4$ is $O(n^2)$, as there are at most $\binom{n}{2}$ non-fat pairs and each such pair is an opposite pair in at most $\binom{6k-1}{2}$ copies of $C_4$.

We claim that the number of fat copies of $C_4$ is $O(n^2)$. To see this, we go through the fat copies of $C_4$ one by one in an arbitrary order and pick an edge (from the four edges of the $C_4$); we always pick the edge which was picked the smallest number of times before (in case there is more than one such edge, then we pick one of them arbitrarily).
When this procedure ends, every edge $e$ has been picked a certain number of times. Let us denote this number by $m(e)$ and call it the \textit{multiplicity} of $e$. Observe that $\sum_{e \in E(G)} m(e)$ is equal to the number of fat copies of $C_4$ in $G$. We will show that $m(e) < 16k^3$ for each edge $e$, thus the number of fat copies of $C_4$ in $G$ is at most $16k^3 \abs{E(G)} = O(n^2)$.

		Let us assume to the contrary that there is an edge $e=ab$ with $m(e) \ge 16k^3$. In this case we will find a rainbow-$C_{2k}$ in $G$, which will lead to a contradiction that completes the proof. More precisely, we are going to prove the following statement: 
		
		\begin{claim}
		For every $2 \le t \le k$ there is a rainbow-$C_{2t}$ in $G$, that contains an edge $e_t$ with $$m(e_t)\ge 16(2k-t)^3.$$ 
		\end{claim}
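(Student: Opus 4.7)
The plan is to prove the claim by induction on $t$, from $t=2$ up to $t=k$; the case $t=k$ produces a rainbow $C_{2k}$, contradicting the rainbow-$C_{2k}$-freeness of $G$.

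For the inductive step from $t$ to $t+1$ (with $t<k$), suppose we have a rainbow $C_{2t}$, denoted $C$, containing an edge $e_t$ with endpoints $a,b$ satisfying $m(e_t)\ge M:=16(2k-t)^3$. Then $e_t$ was picked in $M$ distinct fat copies of $C_4$; list these in the order $e_t$ was picked as $abx_1y_1,abx_2y_2,\dots,abx_My_M$, where the $i$-th $C_4$ has vertices $a,b,x_i,y_i$ in cyclic order. By the selection rule, $e_t$ had the smallest current multiplicity among the four edges of the $i$-th $C_4$ at the time of its picking, so each of the three other edges $ay_i$, $y_ix_i$, $x_ib$ had multiplicity $\ge i-1$ at that moment, hence at the end of the algorithm.

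The planned extension replaces $e_t$ in $C$ by a $3$-edge path $a$-$y'$-$x_i$-$b$ (for a suitable late index $i$), where $y'$ is a freshly chosen common neighbor of $a$ and $x_i$ (rather than the native $y_i$); this $y'$ is available because the fat pair $(a,x_i)$ has at least $6k$ common neighbors. The resulting $(2t+2)$-cycle is rainbow provided (a) $x_i,y'\notin V(C)\setminus\{a,b\}$, (b) the colors of the edges $ay',y'x_i,x_ib$ avoid the other $2t-1$ colors appearing in $C$, and (c) the colors of $ay'$ and $x_ib$ differ (all other color conflicts being ruled out by properness). Given a valid $x_i$, a valid $y'$ can be found by excluding the at most $O(t)$ forbidden common neighbors, using that $6k>6t$ for $t<k$. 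Setting $e_{t+1}:=x_ib$ then gives $m(e_{t+1})\ge i-1$; it suffices to ensure that $i$ can be chosen with $i-1\ge 16(2k-t-1)^3$, i.e.\ that at most $M-16(2k-t-1)^3-1=\Theta(k^2)$ indices are ``bad'' for $x_i$. The delicate point is bounding the number of picked $C_4$s with $x_i$ equal to a forbidden vertex $v\in V(C)\setminus\{a,b\}$: if $(a,v)$ is thin this count is $\le 6k-1=O(k)$ per $v$, yielding total bad count $O(tk)=O(k^2)$; if $(a,v)$ is fat for some such $v$, one instead invokes the fat-pair structure to build a rainbow extension directly, bypassing the multiplicity induction.

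The base case $t=2$ needs a rainbow $C_4$ containing an edge of multiplicity $\ge 16(2k-2)^3>16k^3$. Since this target exceeds the starting multiplicity $m(e)\ge 16k^3$ (for $k\ge 3$), it cannot be attained by a single application of the extension argument above. This step can be handled via a pigeonhole argument over the $3M$ ``other-edge'' incidences of the picked fat $C_4$s through $e=ab$, producing some edge whose multiplicity substantially exceeds $m(e)$, which can then be placed inside a rainbow $C_4$ as needed. The main obstacle throughout will be the quantitative bookkeeping: establishing both the $O(k^2)$ bad-count bound in the inductive step and the multiplicity boost in the base case, each of which requires careful case analysis on whether the relevant pairs of vertices are thin or fat.
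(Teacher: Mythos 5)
Your overall strategy is the same as the paper's: induct on $t$, use the fact that when $e_t$ was picked for the $i$-th time the other three edges of that fat $C_4$ already had multiplicity at least $i-1$, restrict attention to the late picks so that those edges have multiplicity at least $16(2k-t-1)^3$, and then splice a three-edge path in place of $e_t$, using the fatness of an opposite pair of the chosen $C_4$ to supply a fresh middle vertex. However, the two places you defer as ``quantitative bookkeeping'' are exactly where your plan breaks, and where the paper does something different. In your inductive step you count bad \emph{indices} $i$: if some $v\in V(C)$ forms a fat pair with $a$, the number of late picks with $x_i=v$ can be as large as $f(a,v)$, i.e.\ up to about $n$, which swamps your $\Theta(k^2)$ budget; the same problem arises (and you do not mention it) for the at most $2t-1$ vertices $x$ for which the color of $xb$ already appears on $C$, since the pair $(a,x)$ may also be fat. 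Your proposed escape --- ``invoke the fat-pair structure to build a rainbow extension directly'' --- is not an argument: a fat pair $(a,v)$ with both vertices on $C$ does not obviously let you lengthen $C$ by exactly two, and, crucially, any such ad hoc extension must still contain an edge of multiplicity at least $16(2k-t-1)^3$, or the induction cannot continue. The paper sidesteps all of this by counting \emph{distinct vertices} rather than picks: among the last $16t^2$ picks, at most $2\binom{2t-2}{2}$ of the $C_4$s have both of their other vertices on $C$; for the remaining more than $12t^2$ a short double count shows that $u$ or $v$ has more than $2t$ distinct neighbors outside $V(C)$ in these $C_4$s, so one such neighbor $y$ can be chosen with the color of $uy$ not used on $C$; the multiplicity of $uy$ is automatically large because $uy$ lies in a late-picked $C_4$, and the fat opposite pair $(v,y)$ then yields the new middle vertex $x$.

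On the base case, you correctly observed that $16(2k-2)^3>16k^3$ for $k\ge 3$, so the claim at $t=2$ does not follow verbatim from the hypothesis $m(e)\ge 16k^3$; this is in fact a constant-bookkeeping slip in the paper, whose base step simply takes $e_2=e$. But your proposed repair is unsound: no pigeonhole over the $3M$ other-edge incidences can force any edge to have multiplicity substantially exceeding $m(e)$, because the picking rule only pushes the other edges up to (roughly) the multiplicity of the picked edge, and it is perfectly consistent that every edge of $G$ has multiplicity below $16(2k-2)^3$ while $e$ has $16k^3$. The correct fix goes the other way and is free: raise the threshold in the outer contradiction hypothesis to $16(2k-2)^3$ (this only changes the constant in the $O(n^2)$ bound on the number of fat $C_4$s), after which the base case is the paper's one-line argument --- take a fat $C_4$ $abcd$ through $e$ and, if it is not rainbow, use the at least $6k$ common neighbors of the opposite pair to replace the path $cda$ by one creating no color repetition. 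As written, the fat-pair case of your inductive step and your base-case ``multiplicity boost'' are genuine gaps.
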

		\begin{proof}
		We proceed by induction on $t$. For the base step $t=2$ let us take a fat copy $abcda$ of $C_4$ containing $e_2=e=ab$. If the $C_4$ is not rainbow, we use its fatness to find at least $6k-2$ other $2$-paths between $a$ and $c$. At most one of those can share a color with the edges $ab$ or $bc$, thus we can replace the subpath $cda$ with another subpath to obtain a rainbow copy of $C_4$ containing $e_2$.
        
        Let us assume now that we have found a rainbow cycle $C$ of length $2t$, $t\leq k-1$, containing an edge $e_t=uv$ with multiplicity at least $16(2k-t)^3$. Let us consider the last $16t^2$ times $e_t$ was picked. This way we find a set $\cF_t$ of $16t^2$ fat copies of $C_4$ each containing $e_t$ and containing only edges with multiplicity at least $16(2k-t-1)^3$.
        
         At most $2\binom{2t-2}{2}$ of the copies of $C_4$ in $\cF_t$ have the other two of their vertices (besides $u$ and $v$) in $C$, as we have to pick two other vertices from $C$ and there are at most two $4$-cycles containing the edge $uv$ and two other given vertices. 
%
         	Thus, there are more than $12t^2$ fat $4$-cycles in $\cF_t$ that contain a vertex not in $V(C)$, let $\cF_t'$ be their set. Let $A$ be the set of vertices not in $V(C)$ that are neighbors of $u$ in a $4$-cycle in $\cF_t'$, and $B$ be the set of vertices not in $V(C)$ that are neighbors of $v$ in a $4$-cycle in $\cF_t'$. We claim that $|A|$ or $|B|$ is at most $2t$.
         	Indeed, suppose the contrary and look at the neighbours of $u$ in the $4$-cycles in $\cF_t'$. On the one hand, there are at most $2t$ of them in $V(C)$. Each of them appears in at most $2t$ of the $4$-cycles in $\cF_t'$, as the fourth vertices of these cycles are all different and are in $B$.
         	On the other hand, by assumption there are again at most $2t$ neighbours of $u$ outside $V(C)$ (those in the set $A$). Each of them is in at most $2t$ $4$-cycles in $\cF_t'$ where the fourth vertex is in $V(C)$ and, similarly as before, in at most $2t$ $4$-cycles in $\cF_t'$ where the fourth vertex is outside $V(C)$. This shows that together there can be at most $2t\cdot 2t+2t\cdot (2t+2t)=12t^2$ cycles in $\cF_t'$, a contradiction.

         	Without loss of generality let $u$ be the vertex which  has more than $2t$ neighbors in $4$-cycles in $\cF_t$ that are not in $V(C)$. Among these more than $2t$ vertices, at least one of them, call it $y$, has that the color on $yu$ is not used in $C$. Also, recall that the multiplicity of $yu$ is at least $16(k-t-1)^3$.
            As the $4$-cycles are fat, there are at least $6k$ common neighbors of $v$ and $y$ and at least $4k$ of those are not in $V(C)$. There are less than $2k$ colors that are used in $C$ and on the edge $yu$ and each of those colors appears at most once on edges that connect $y$ and $v$ to the at least $4k$ selected common neighbors. Therefore, there is a common neighbor $x$ such that the colors of the edges $vx$ and $yx$ are neither in $C$, nor on the edge $yu$. Then we can replace the edge $uv$ in $C$ with the edges $uy$, $yx$, $xv$ to obtain a rainbow cycle of length $2t+2$, which contains an edge (namely $uy$) with multiplicity at least $16(k-t-1)^3$.
        \end{proof}

This finishes the proof of the case $\ell=2$. Now we consider the case when $\ell\geq 3$. Note that we have $\frac{1}{2}\sum_{a\neq b,\,a,b \in V(G)}\binom{f(a,b)}{2}=O(n^2)$ as the left-hand side counts the number of copies of $C_4$.

		\begin{claim}\label{klem} For every $a\in V(G)$ we have $\sum_{b \in V(G) \setminus \{a\}} f(a,b)\le cn$ for some $c=c(k)$.
			
		\end{claim}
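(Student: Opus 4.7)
The plan is to prove the claim by contradiction. Suppose there is a vertex $a\in V(G)$ with $\sum_{b\neq a}f(a,b)>cn$ for a constant $c=c(k)$ to be chosen sufficiently large; I will construct a rainbow copy of $C_{2k}$ in $G$, contradicting the hypothesis on $G$.

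First, I rewrite $\sum_{b\neq a}f(a,b)=\sum_{x\in N(a)}(d(x)-1)$, since both sides count ordered pairs $(x,b)$ with $b\neq a$ and $x\in N(a)\cap N(b)$. Fix the threshold $M:=5k-4$; by Lemma~\ref{rp} applied with parameter $k+1$ and $U=A=\emptyset$, this is precisely the common-neighbor count required of each consecutive pair in a $(k{+}1)$-sequence in order to extend it to a rainbow path. Using $f(a,b)\le n$, I obtain
\[
cn<\sum_{b\neq a}f(a,b)\le Mn+|B|\cdot n,
\]
where $B:=\{b\neq a:f(a,b)\ge M\}$, so $|B|\ge c-M$, which can be made arbitrarily large by choosing $c$ large.

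To produce a rainbow $C_{2k}$, I would apply Lemma~\ref{rp} to a sequence $v_1=a,v_2,\ldots,v_k,v_{k+1}=a$ in which $v_2,\ldots,v_k$ are distinct vertices of $V\setminus\{a\}$ and every consecutive pair $(v_i,v_{i+1})$ has at least $M$ common neighbors; the lemma would then output a rainbow path $a\,u_1\,v_2\,u_2\cdots v_k\,u_k\,a$ which, since the $u_i$'s are pairwise distinct and avoid all the $v_i$'s, is exactly a rainbow $C_{2k}$. The endpoints $v_2,v_k$ of the chain I would pick from $B$ so that the first and last links are automatically fat.

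The main obstacle is then to construct the intermediate vertices $v_3,\ldots,v_{k-1}$---equivalently, to find a walk of length $k$ in the auxiliary ``fat graph'' $H$ on $V(G)$ (whose edges are the pairs with at least $M$ common neighbors) that starts and ends at $a$ and is vertex-disjoint otherwise. To handle this I would perform a BFS in $H$ from $a$: either $H$ contains a cycle of length exactly $k$ through $a$, giving the chain directly, or the BFS tree rooted at $a$ is so shallow that the ball of radius $\lceil k/2\rceil$ around $a$ in $H$ has bounded size; combining the latter with the global bound $\sum_{a,b}\binom{f(a,b)}{2}=O(n^2)$ coming from the preceding $C_4$-count then contradicts $|B|\ge c-M$ being large. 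Closing up the fat chain back to $a$ in exactly $k$ steps while preserving distinctness of the $v_i$'s is the crux of the argument.
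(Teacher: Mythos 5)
There is a genuine gap, and it is not merely an unfinished detail: the route you chose cannot be completed as envisioned. Your contradiction hypothesis $\sum_{b\neq a}f(a,b)>cn$ only produces many \emph{fat pairs containing $a$} (your set $B$); it gives no information whatsoever about pairs $(v_i,v_{i+1})$ with $v_i,v_{i+1}\neq a$ being fat, and your entire cycle-building mechanism (Lemma \ref{rp} applied to a chain $a,v_2,\dots,v_k,a$ of consecutive fat pairs) needs exactly those. Concretely, let the bipartite graph between $N(a)$ and the second neighborhood of $a$ be an incidence graph of a projective plane (degrees $\approx\sqrt{m}$, all codegrees $1$): then $\sum_{b\neq a}f(a,b)$ is superlinear and $B$ is huge, but the auxiliary fat graph $H$ is a star centered at $a$, so it contains no cycle through $a$ of any length, and your fallback alternative (``no $k$-cycle through $a$ implies the ball of radius $\lceil k/2\rceil$ around $a$ in $H$ is bounded'') is false, since that ball contains all of $B$. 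Invoking the global bound $\sum_{a,b}\binom{f(a,b)}{2}=O(n^2)$ does not rescue this either, because in such examples all codegrees away from $a$ equal $1$ and contribute nothing. So the ``crux'' you flag at the end is precisely the point where the argument breaks, and it cannot be fixed within the fat-chain framework.

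The paper proves the claim by a different mechanism that does not use fat pairs at all. It writes $\sum_{b\neq a}f(a,b)=2|E_1|+|E_2|$, where $E_1$ is the set of edges inside $N_1(a)$ and $E_2$ the set of edges between $N_1(a)$ and $N_2(a)$, and shows that $E_1\cup E_2$ contains no copy of the $12k$-ary tree of depth $4k$; otherwise one greedily walks down the tree, choosing each next vertex so that its edge on the path and its edge to $a$ (if present) avoid all previously used colors, producing a rainbow path $P$ on $4k$ vertices whose vertices' edges to $a$ are also rainbow-compatible, and then two vertices of $P$ at distance $2k-2$ along $P$ lying in $N_1(a)$ close a rainbow $C_{2k}$ through $a$. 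Since a graph with superlinearly many edges contains a subgraph of large minimum degree and hence any fixed bounded-size tree, this forbidden-tree statement forces $2|E_1|+|E_2|\le cn$. Note that in the projective-plane example above this argument does find a rainbow $C_{2k}$ (via actual edges of $E_2$ and edges back to $a$), which is exactly the configuration your fat-chain approach is blind to. If you want to salvage your write-up, you should abandon the auxiliary fat graph here and instead argue directly about edges incident to $N_1(a)$, closing the cycle through $a$ itself.
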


		Note that the left-hand side of the above inequality is the number of paths of length $3$ starting at $a$.
        
        \begin{proof}
        Let $N_1(a)$ be the set of vertices adjacent to $a$ and $N_2(a)$ be the set of vertices at distance exactly $2$ from $a$.
			Let $E_1$ be the set of edges induced by $N_1(a)$ and $E_2$ be the set of edges $uv$ with  $u \in N_1(a)$ and $v \in N_2(a)$. Then clearly $\sum_{b\in V(G) \setminus \{a\}}f(a,b)=2|E_1|+|E_2|$.
			
			We claim that $E_1 \cup E_2$ does not contain a copy of the $12k$-ary tree with depth $4k$. Assume it does contain such a copy $T$ and let $x_1$ be the root of $T$. In what follows, we will construct a path $P$ on $4k$ vertices starting at $x_1$. Let the next vertex be an arbitrary child $x_2$ of $x_1$. At later points we always pick the next vertex $x_{i+1}$ to be a child of $x_i$ such that both the color of $x_ix_{i+1}$ and the color of $ax_{i+1}$ (if exists) are different from the colors of $ax_j$ for every $j\le i$ (if exists) and from the colors used on the path earlier. As there are at most $8k$ forbidden colors, there are at most $8k$ children of $x_i$ that we cannot pick because of $x_ix_{i+1}$ and at most $4k-1$ children of $x_i$ that we cannot pick because of $ax_{i+1}$ (as the color of $ax_j$ is automatically avoided here). Hence we always have a neighbor to pick and we can really obtain a desired path $P$ in this way. 
            Observe that this path, together with the edges connecting some of its vertices to $a$ is rainbow. If $x_i$ and $x_{i+2k-2}$ are both in $N_1(a)$ for some $i$, then they, together with the vertices of $P$ between them and $a$ form a rainbow $C_{2k}$, a contradiction which finishes the proof. If all the edges of $P$ are in $E_2$, then $x_1$ and $x_{2k-1}$ or $x_2$ and $x_{2k}$ are both in $N_1(a)$, and the previous case applies. Hence we may assume that the edge $x_ix_{i+1}$ is in $E_1$ for some $i$. Without loss of generality we also may assume $i\le 2k$. Then $x_{i+2k-2}$ and $x_{i+1+2k-2}$ both have to be in $N_2(a)$ (otherwise the earlier case applies), but then the edge between them is not in $E_1\cup E_2$; a contradiction.
        \end{proof}
        
        From now on we follow the proof from \cite{GGMV2017} more closely, as we have already dealt with the difficulties arising from forbidding only rainbow copies of $C_{2k}$.
        
        		Let us fix vertices $v_1, v_2, \dots, v_\ell$ and let $g(v_1,v_2, \ldots, v_\ell)$ be the number of copies of $C_{2l}$ in $G$ of the form $u_1v_1u_2v_2\cdots u_\ell v_\ell u_1$ for some vertices $u_1,u_2,\dots,u_\ell$. 
		 Clearly $g(v_1,v_2, \ldots, v_\ell) \le \prod_{j=1}^\ell f(v_j,v_{j+1})$ (where $v_{\ell+1}=v_1$ in the product).

		 If we add up $g(v_1,v_2, \ldots, v_\ell)$ for all possible $\ell$-tuples $v_1, v_2, \dots, v_\ell$ of $\ell$ distinct vertices in $V(G)$, we count every $C_{2\ell}$ exactly $4\ell$ times. Therefore, the number of copies of $C_{2\ell}$ is at most
		
		\begin{equation}\label{equa3}\frac{1}{4\ell}\sum_{(v_1, v_2, \dots, v_\ell)}\prod_{j=1}^\ell f(v_jv_{j+1})\le\frac{1}{4\ell}\sum_{(v_1, v_2, \dots, v_\ell)}\frac{f^2(v_1,v_2)+f^2(v_2,v_3)}{2}\prod_{j=3}^\ell f(v_jv_{j+1}).
		\end{equation}
		
	Fix two vertices $u,v\in V(G)$ and let us examine what factor $f^2(u,v)$ is multiplied with in \eqref{equa3}. It is easy to see that $f^2(u,v)$ appears in \eqref{equa3} whenever $u=v_1,v=v_2$ or $u=v_2,v=v_1$ or $u=v_2,v=v_3$ or $u=v_3,v=v_2$.
		Let us consider the case $u=v_1$ and $v=v_2$, the other three cases are similar and give only an extra constant factor of $4$.
		In this case $f^2(u,v)$ is multiplied with $$\frac{1}{8\ell}\left(\prod_{j=3}^{\ell-1} f(v_jv_{j+1})\right)f(v_\ell,u) = \frac{1}{8\ell}f(u,v_\ell) \prod_{j=3}^{\ell-1} f(v_jv_{j+1})$$ for all the choices of $(\ell-2)$-tuples $v_3, v_4, \dots, v_\ell$ of distinct vertices. We claim that $$\sum_{(v_3, v_4, \dots, v_\ell)}\frac{1}{8\ell}f(u,v_\ell)\prod_{j=3}^{\ell-1} f(v_jv_{j+1})\le \frac{c^{\ell-2}n^{\ell-2}}{8\ell},$$ where $c=c(k)$ is the constant from Claim \ref{klem}. Indeed, we can rewrite the left-hand side as 
		\[
		\frac{1}{8\ell}\left(\sum_{v_\ell\in V(G)}f(u,v_\ell)\left(\sum_{v_{\ell-1}\in V(G)}f(v_\ell,v_{\ell-1})\cdots \left(\sum_{v_{4}\in V(G)}f(v_5,v_4)\left(\sum_{v_{3}\in V(G)}f(v_4,v_{3})\right)\right)\cdots\right)\right).
		\] After repeatedly applying Claim \ref{klem} we arrive at the desired upper bound and this finishes the proof.
\end{proof}

We do not know the order of magnitude of $\ex(n,C_{2k},\textup{rainbow-}C_{2k})$ already for $k = 2, 3$, but we can improve Theorem \ref{cycles} slightly in these cases.

\begin{proposition} 
\label{c4}
We have $\ex(n,C_4,\textup{rainbow-}C_4)=\Omega(n^{3/2})$.
\end{proposition}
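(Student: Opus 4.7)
The plan is to construct, for every sufficiently large $n$, a properly edge-colored graph $G$ on $n$ vertices containing $\Omega(n^{3/2})$ copies of $C_4$ and no rainbow $C_4$; the construction is a ``prism'' over a $C_4$-free host graph.

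Let $H$ be a $C_4$-free graph on $m=\lfloor n/2\rfloor$ vertices with $\Theta(m^{3/2})$ edges, for example the Erd\H{o}s--R\'enyi polarity graph. Build $G$ by taking two disjoint copies $H_0,H_1$ of $H$ and adding a perfect matching $M$ joining each vertex of $H_0$ with its twin in $H_1$ (pad with isolated vertices so that $|V(G)|=n$). For the coloring, fix a proper edge coloring $\chi$ of $H$ with $\Delta(H)+1$ colors (by Vizing's theorem), color each horizontal edge in $H_0$ or $H_1$ by $\chi$ of the corresponding edge of $H$, and color every matching edge of $M$ by a single new color $c_0$. At each vertex of $G$ the horizontal edges inherit pairwise distinct colors from $\chi$ and the one incident matching edge uses $c_0$, so the coloring is proper.

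The heart of the argument is the following parity observation: every $C_4$ in $G$ has the form $(u,0),(v,0),(v,1),(u,1)$ for some edge $uv\in E(H)$. Indeed, as we walk around a cycle each matching edge flips the level and each horizontal edge preserves it, so any cycle in $G$ must use an even number of matching edges. For a $C_4$ this leaves $0$, $2$, or $4$ matching edges; four is impossible because $M$ is a matching, zero would give a $C_4$ inside $H_0$ or $H_1$ and contradict the $C_4$-freeness of $H$, and two forces exactly the claimed ``column'' structure. The four edges of such a column cycle are colored $\chi(uv),c_0,\chi(uv),c_0$ in cyclic order, so only two colors appear and the $C_4$ is not rainbow.

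There is exactly one column $C_4$ per edge of $H$, producing $|E(H)|=\Theta(m^{3/2})=\Omega(n^{3/2})$ copies of $C_4$ in $G$, which is the desired bound. The main obstacle is the parity classification of $C_4$s: it is precisely the step that uses the $C_4$-freeness of $H$ in an essential way and guarantees that no unexpected $C_4$ can sneak through and become rainbow.
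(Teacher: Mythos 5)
Your construction is the same as the paper's: two copies of a dense $C_4$-free graph joined by a perfect matching whose edges all receive one color, with the observation that every $C_4$ must be a ``column'' cycle using two matching edges of the same color, and one such $C_4$ per edge of the host graph gives $\Omega(n^{3/2})$ copies. The argument is correct and matches the paper's proof essentially step for step (your explicit Vizing coloring is just a concrete instance of the paper's ``extend to an arbitrary proper coloring'').
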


\begin{proof}
Let us take two isomorphic $C_4$-free graphs $G$ and $G'$ on $\lfloor n/2\rfloor$ vertices each with $\Omega(n^{3/2})$ edges. Connect every vertex $v$ in $G$ to its copy $v'$ in $G'$. Denote the resulting graph by $G_0$. Let us color these edges with color 1 and extend this to an arbitrary proper coloring of the edges of $G_0$. Every copy of $C_4$ in $G_0$ has to use vertices from both $G$ and $G'$, thus some edge $vv'$ of color 1. The neighbor of $v$ in the $C_4$ must be a vertex in $G$ and the neighbor of $v'$ must be a vertex in $G'$. They can only be connected by another edge of color 1, thus the $C_4$ is not rainbow. On the other hand, for every edge $uv$ of $G$ the $4$-cycle $uvv'u'u$ is in $G_0$, thus there are $\Omega(n^{3/2})$ copies of $C_4$ in $G_0$.
\end{proof}

\begin{proposition}
\label{c6}
We have  $\ex(n,C_6,\textup{rainbow-}C_6)=O(n^{8/3})$.

\end{proposition}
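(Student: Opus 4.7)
The plan is to combine Claim~\ref{klem} with an antipodal decomposition of $C_6$'s into pairs of length-three paths. First, since the properly edge-colored graph $G$ contains no rainbow $C_6$, one has $|E(G)|\le \ex^*(n,C_6)=O(n^{4/3})$ by the Keevash--Mubayi--Sudakov--Verstra\"ete bound. Applying Claim~\ref{klem} with $k=3$ gives, for every vertex $a$, the estimate $\sum_{b\ne a} f(a,b)\le cn$; summing, the number of $P_3$'s in $G$ is $O(n^2)$. Moreover, since $\sum_{w\in N(a)} d(w) = \sum_{b\ne a} f(a,b)+d(a)=O(n)$, a double count gives
\[
\sum_{uv\in E(G)} d(u)d(v) = \sum_{a} d(a)\sum_{w\in N(a)} d(w) \le O(n)\cdot \sum_a d(a) = O(n\cdot |E(G)|)=O(n^{7/3}),
\]
which upper bounds the number of $P_4$'s in $G$.

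Each $C_6$ has three pairs of antipodal vertices, and between any such pair the $C_6$ decomposes into two internally vertex-disjoint $P_4$'s. Letting $p_3(u,v)$ denote the number of $P_4$'s from $u$ to $v$, this yields
\[
|C_6(G)| \le \frac{1}{12}\sum_{(u,v)\text{ ordered}} p_3(u,v)^2.
\]
From the step above, $\sum_{(u,v)} p_3(u,v) = 2\cdot|P_4(G)|=O(n^{7/3})$. Hence it suffices to prove the uniform bound $\max_{u,v} p_3(u,v) = O(n^{1/3})$, for then $\sum p_3^2 \le (\max p_3)(\sum p_3) = O(n^{1/3})\cdot O(n^{7/3}) = O(n^{8/3})$, as desired.

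To establish $\max p_3(u,v)=O(n^{1/3})$, suppose for contradiction that $p_3(u,v)\ge Cn^{1/3}$ for a sufficiently large constant $C$. Each such $P_4$, of the form $u$-$a$-$b$-$v$, carries a \emph{color triple} $\{c(ua),c(ab),c(bv)\}$ of three distinct colors (by properness). I would then run a Lemma~\ref{rp}-style greedy argument to find two of these $P_4$'s that are simultaneously internally vertex-disjoint and have disjoint color triples; such a pair produces a rainbow $C_6$ of the form $u$-$a$-$b$-$v$-$b'$-$a'$-$u$, contradicting the rainbow-$C_6$-freeness of $G$. In producing such a pair, the number of ``vertex conflicts'' (pairs of $P_4$'s sharing an internal vertex $w$) is controlled using $f(u,w)+f(w,v)\le 2cn$ from Claim~\ref{klem}, while the number of ``color conflicts'' (pairs with intersecting color triples) is controlled by the matching structure of each color class (at most $n/2$ edges per color).

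The main obstacle is this final step. A naïve balancing of vertex- and color-conflicts yields only the weaker bound $\max p_3(u,v)=O(n)$, which is insufficient. To sharpen the threshold to $O(n^{1/3})$, one must exploit the global bound $|E(G)|=O(n^{4/3})$ more carefully, for instance via a Zarankiewicz-type estimate on the bipartite graph between $N(u)\setminus\{v\}$ and $N(v)\setminus\{u\}$ showing that vertex- and color-conflicts grow genuinely sub-quadratically in $p_3(u,v)$. This careful refinement is the technical crux of the argument.
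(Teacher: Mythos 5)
Your reduction $|C_6(G)|\le\frac{1}{12}\sum_{(u,v)}p_3(u,v)^2$ combined with $\sum p_3(u,v)=O(n^{7/3})$ is a reasonable starting point, but the step you flag as the ``technical crux'' is not merely hard to fill in --- the conjectured bound $\max_{u,v}p_3(u,v)=O(n^{1/3})$ is false. Take $u$ with neighbors $a_1,\dots,a_m$, $v$ with neighbors $b_1,\dots,b_m$, a matching $a_ib_i$ for each $i$, and the proper coloring $c(ua_i)=c(b_iv)=i$, $c(a_ib_i)=m+i$. Every $C_6$ in this graph has the form $ua_ib_ivb_ja_ju$ and repeats the colors $i$ and $j$, so the graph is rainbow-$C_6$-free; yet $p_3(u,v)=m$ can be $\Theta(n)$. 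This counterexample also exposes an earlier slip: you assert that each $P_4$ $u$-$a$-$b$-$v$ ``carries a color triple of three distinct colors (by properness),'' but properness only forbids $c(ua)=c(ab)$ and $c(ab)=c(bv)$ --- the edges $ua$ and $bv$ share no vertex, so $c(ua)=c(bv)$ is allowed, and that is exactly what the example exploits. Consequently, two internally vertex-disjoint $P_4$'s with disjoint color triples need not produce a rainbow $C_6$ (one of the two paths may already repeat a color on its own), so even the conditional argument after the gap does not go through as stated. The paper's proof sidesteps all of this: it never localizes to pairs $(u,v)$, but instead declares a pair ``fat'' if it has at least $11$ common neighbors, uses Lemma~\ref{rp} to show that in any $C_6$ $v_1\cdots v_6$ no two consecutive pairs $(v_i,v_{i+2}),(v_{i+2},v_{i+4})$ can both be fat (hence at most two fat pairs overall), and then counts each $C_6$ by choosing two suitable edges in $O(n^{4/3})^2=O(n^{8/3})$ ways and filling in the remaining two vertices in $O(1)$ ways via the thin pairs. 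If you want to salvage your antipodal-decomposition framework, you would have to replace the global $\max p_3$ bound with a structural dichotomy akin to the paper's fat/thin pairs rather than aiming for a pointwise estimate.
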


\begin{proof} Let $G$ be a rainbow-$C_6$-free graph and $v_1v_2v_3v_4v_5v_6v_1$ be a 6-cycle in it. Note that $G$ has $O(n^{4/3})$ edges. We call a pair of vertices fat if they have at least $11$ common neighbors. If both pairs $(v_1,v_3)$ and $(v_3,v_5)$ are fat, then we can find a rainbow $C_6$ of the form $v_1uv_3u'v_5v_6v_1$. Indeed, we can apply Lemma \ref{rp} for $v_1,v_3,v_5$ with $U=\{v_6\}$ and $A$ containing the color of $v_5v_6$ and $v_6v_1$. This way we find a rainbow path $v_1uv_3u'v_5$ avoiding $v_6$ and the colors in $A$, thus it forms a rainbow 6-cycle with the edges $v_5v_6$ and $v_6v_1$. As a consequence we have that there are at most two fat pairs among the pairs $(v_i,v_{i+2})$ (where addition in the indices is modulo $6$).

Let us count first the 6-cycles with two fat pairs. Observe that those pairs cannot share a vertex, thus there are only two possible configurations: either one of the pairs has a vertex between the two vertices of the other pair (like $(v_1,v_3)$ and $(v_2,v_4)$) or not (like $(v_1,v_3)$ and $(v_4,v_6)$). To count those $6$-cycles where $(v_1,v_3)$ and $(v_4,v_6)$ are the fat pairs we can pick the edges $v_1v_6$ and $v_3v_4$ in $O(n^{8/3})$ ways and there are constant many ways to pick $v_2$ connected to both $v_1$ and $v_3$ and $v_5$ connected to both $v_4$ and $v_6$.
For those $6$-cycles where $(v_1,v_3)$ and $(v_2,v_4)$ are the fat pairs we pick the edges $v_1v_6$ and $v_3v_4$ in $O(n^{8/3})$ ways and similar to the previous case, there are constant many ways to pick the remaining vertices.

To count $6$-cycles where at most one pair, say $v_1v_3$, is fat, we pick the edges $v_1v_2$ and $v_4v_5$ and proceed similarly as above.
\end{proof}

\section{Trees and Forests}
\label{sec:TreesForests}

The aim of this section is to prove Theorem \ref{trees} and some additional results about forests. Let us first prove the following proposition that is used later in this section.

\begin{proposition}\label{disco}
Let $H$ be a graph that is neither a star, nor a triangle and let $c$ be the number of connected components of $H$. Then $\ex(n,H,\textup{rainbow-}H)=\Omega(n^c)$.
\end{proposition}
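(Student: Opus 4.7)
The plan is to construct, for all sufficiently large $n$, a properly edge-colored graph $G$ on $n$ vertices that contains $\Omega(n^c)$ copies of $H$ and no rainbow copy of $H$.

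The key preliminary step is to verify that $H$ admits a proper edge coloring using at most $|E(H)|-1$ colors, so that $\chi'(H)\le |E(H)|-1$. For this I would first establish the structural lemma that a graph in which every two edges share a vertex must be either a star or a triangle: considering three pairwise incident edges, either they share a common vertex (in which case one checks inductively that every further edge must also contain that vertex, yielding a star), or else they form a triangle and a quick case analysis shows that no fourth pairwise-incident edge can be added. Since by hypothesis $H$ is neither a star nor a triangle, $H$ contains two disjoint edges $e_1,e_2$; assigning $e_1,e_2$ a common color and each of the remaining $|E(H)|-2$ edges a distinct new color yields a proper edge coloring $\phi$ of $H$ on only $|E(H)|-1$ colors (properness holds because $e_1$ and $e_2$ share no vertex).

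Next, set $b=\lfloor n/|V(H)|\rfloor$ and let $G$ be the vertex-disjoint union of $b$ copies $H^{(1)},\ldots,H^{(b)}$ of $H$, padded by isolated vertices to reach $n$ vertices. Color each copy $H^{(k)}$ by transferring $\phi$; since the copies are vertex-disjoint, the resulting edge coloring of $G$ is proper and uses only the $|E(H)|-1$ colors of $\phi$. Enumerating the components of $H$ as $H_1,\ldots,H_c$, every tuple $(k_1,\ldots,k_c)\in\{1,\ldots,b\}^c$ gives rise to a copy of $H$ in $G$ by taking the copy of $H_i$ sitting inside $H^{(k_i)}$ for each $i$: these $c$ component-copies lie on pairwise disjoint vertex sets, so their union is a subgraph of $G$ isomorphic to $H$. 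Different tuples give copies on different vertex sets, so $G$ contains at least $b^c=\Omega(n^c)$ copies of $H$. Finally, any copy of $H$ in $G$ has $|E(H)|$ edges with colors drawn from the $|E(H)|-1$-color palette of $\phi$, so by pigeonhole two of these edges share a color and the copy is not rainbow; thus $G$ is rainbow-$H$-free.

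\textbf{Main obstacle.} The only step that is not entirely routine is the structural lemma that pairwise incident edges force a star or a triangle, from which $\chi'(H)<|E(H)|$ follows for the $H$ at hand. Once this is in place, the construction, the lower bound of $b^c$ on the number of copies (obtained by mixing components across the disjoint copies), and the pigeonhole argument for rainbow-freeness are all elementary.
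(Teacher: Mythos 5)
Your proof is correct and takes essentially the same approach as the paper: take linearly many vertex-disjoint blow-ups, color them so that the global palette has fewer than $|E(H)|$ colors, and conclude by pigeonhole that no copy of $H$ can be rainbow. The paper packages this slightly differently: it takes disjoint copies of each \emph{component} $H'$ separately (rather than of $H$ as a whole), and colors each copy of $H'$ with a minimum proper edge coloring on $\chi'(H')$ colors (the paper writes ``chromatic number,'' which reads as a typo for chromatic index), so that the total palette has size $m=\max_{H'}\chi'(H')=\chi'(H)$; you instead produce an explicit proper edge coloring of $H$ on $|E(H)|-1$ colors by giving two disjoint edges $e_1,e_2$ a common color, and reuse it on disjoint copies of $H$. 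These are the same idea seen from two angles, since the structural fact underlying both is identical: $H$ neither a star nor a triangle $\Rightarrow$ two disjoint edges $\Rightarrow \chi'(H)\le |E(H)|-1$. One small caveat that your write-up shares with the paper: the structural lemma ``pairwise-incident edges $\Rightarrow$ star or triangle'' is, as stated, about the edge set, and if one allows $H$ to carry extra isolated vertices (e.g.\ $H=K_3\cup K_1$) then $H$ is neither a star nor a triangle yet has no two disjoint edges, and indeed $\ex(n,H,\textup{rainbow-}H)=0$ there. The paper implicitly excludes such degenerate $H$ (its claim ``$H$ has more than $m$ edges'' also fails for them), so this is a shared omission rather than a defect of your argument relative to the paper's.
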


\begin{proof} Let $m$ be the largest chromatic number of a component of $H$. Let us consider a graph $G$ that contains linearly many vertex-disjoint copies of each component. Then $G$ obviously contains $\Omega(n^c)$ copies of $H$. For each component $H'$ of $H$, we color each copy of it the same way: using colors from 1 to $\chi(H')$. This way we properly color $G$ with $m$ colors. Since $H$ has more than $m$ edges, this implies that there is no rainbow copy of $H$ in $G$, finishing the proof.
\end{proof}

Next we will determine the order of magnitude for double stars. 

\begin{proposition}\label{doublestar} 
If $p,r\geq 1$, then $\ex(n,S_{p,r},\textup{rainbow-}S_{p,r})=\Theta(n)$.
\end{proposition}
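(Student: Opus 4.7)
For the lower bound $\Omega(n)$, I would appeal directly to Proposition~\ref{disco}. Since $S_{p,r}$ (with $p,r\ge 1$) is connected (so the number of components is $c=1$), and is neither a star (it contains $P_4$) nor a triangle, Proposition~\ref{disco} yields $\Omega(n^c)=\Omega(n)$. Concretely, this corresponds to taking $\Theta(n)$ pairwise vertex-disjoint copies of $S_{p,r}$ and properly edge-coloring them identically using only $\max(p,r)+1 \le p+r$ colors, which forces a color repetition in every copy and thus prevents any rainbow copy.

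For the upper bound, let $G$ be a properly edge-colored rainbow-$S_{p,r}$-free graph on $n$ vertices. The remark preceding Theorem~\ref{trees} already yields $|E(G)|=O(n)$. The core of the argument is the following structural claim: setting $D:=2(p+r)+2$, for every edge $uv\in E(G)$ with $d(v)\ge D$ and $d(u)\ge \min(p,r)+1$, the edge $uv$ is the center edge of a rainbow copy of $S_{p,r}$. I would prove this greedily: assuming WLOG $d(u)\ge p+1$, first pick any $p$ neighbors $a_1,\dots,a_p$ of $u$ different from $v$; the edges $uv,ua_1,\dots,ua_p$ then carry $p+1$ distinct colors. We then seek $r$ more neighbors $b_1,\dots,b_r$ of $v$ that are vertex-disjoint from $\{u,a_1,\dots,a_p\}$ and whose edges to $v$ avoid the $p+1$ already-used colors. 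Since in a proper coloring each forbidden color appears on at most one edge at $v$, at most $p+(p+1)=2p+1$ neighbors of $v$ are blocked; because $|N(v)\setminus\{u\}|\ge D-1\ge 2p+r+1$, at least $r$ valid choices remain, giving the desired rainbow copy.

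Define $V_H:=\{v\in V(G):d(v)\ge D\}$. The structural claim immediately implies that every neighbor of every $V_H$-vertex has degree at most $\min(p,r)$; in particular, $V_H$ is an independent set. I would then classify each copy of $S_{p,r}$ by its (unique) center edge $uv$ --- the only edge of $S_{p,r}$ whose two endpoints both have degree at least $2$. If this center edge has an endpoint $v\in V_H$, then its other endpoint $u$ satisfies $d(u)\le \min(p,r)$, so $u$ has fewer than $\min(p,r)+1$ neighbors besides $v$ and hence cannot serve as either the $p$-center or the $r$-center; consequently no copy of $S_{p,r}$ is centered at such an edge. All remaining center edges lie inside $V_L$, where both endpoints have degree at most $D-1$, so each is the center of at most $2D^{p+r}=O(1)$ copies of $S_{p,r}$. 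Summing over the $O(n)$ edges of $G$ gives the desired $O(n)$ bound. The main delicate point is the greedy extension in the structural claim, where vertex-disjointness of the leaves and color-disjointness of the $p+r+1$ edges must be achieved simultaneously; the value $D=2(p+r)+2$ is tuned precisely so that both obstructions together block fewer than the required number of neighbors of the high-degree endpoint.
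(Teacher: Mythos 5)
Your proposal is correct and follows essentially the same route as the paper: the heart of both arguments is the greedy observation that an edge whose one endpoint has large (constant) degree and whose other endpoint has at least $\min(p,r)+1$ neighbors can be extended to a rainbow $S_{p,r}$ by avoiding the $O(p)$ blocked vertices and colors, which forces every center of a copy to have bounded degree, after which the $O(n)$ count is immediate. Your bookkeeping via center edges and the $O(n)$ edge bound, versus the paper's "pick a center vertex, then its neighbor" count with threshold $2p+r$, is only a cosmetic difference.
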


\begin{proof} The lower bound on $\ex(n,S_{p,r},\textup{rainbow-}S_{p,r})$ follows from Proposition \ref{disco}. 

For the upper bound, assume without loss of generality that $p\le r$ and consider a properly edge-colored graph $G$ on $n$ vertices without a rainbow copy of $S_{p,r}$. We want to bound the number of copies of $S_{p,r}$ in $G$.

We claim that if a vertex $v$ has degree more than $2p+r$ in $G$, than it cannot be a center of a copy of $S_{p,r}$. Indeed, if $v$ is a center of some copy, then it has a neighbor $u$ which has at least $p$ neighbors different from $v$. 
Let us choose a set $A$ of size $p$ out of these neighbors of $u$ arbitrarily. Then $v$ has at least $p+r$ neighbors not in $A$ and different from $u$ and at least $r$ of them do not have any of the colors that appear on the edges between $u$ and vertices in $A$. Thus, those $r$ vertices together with $u$, $v$ and $A$ form a rainbow copy of $S_{p,r}$, which contradicts our assumption.

Now we are ready to count the copies of $S_{p,r}$ in $G$. We can pick a center of it in at most $n$ ways. Then we can pick one of its neighbors to be the other center in at most $2p+r$ ways and there are at most $\binom{2p+r-1}{r}$ and $\binom{2p+r-1}{p}$ ways to pick the $p$ and $r$ other neighbors of the centers, respectively. Thus, there are $O(n)$ copies of $S_{p,r}$ in $G$ and this finishes the proof.
\end{proof}

Let us now turn to the proof of Theorem \ref{trees}. We restate it here for convenience.

\begin{rstthm1.3}
If $T$ is a tree with $t$ vertices that is neither a star nor a double star, then
\begin{equation*}
    \ex(n,T,\textup{rainbow-}T)=\Omega(n^{\left\lceil t/4\right \rceil}).
\end{equation*}
\end{rstthm1.3}

\begin{proof}

First we handle the case $T=P_5$ separately. Theorem \ref{path} gives $\ex(n,P_5,\textup{rainbow-}P_5)=\Omega(n^2)$, from which the statement follows for $P_5$. Therefore, from now on we may assume that $T$ is not $P_5$. 

Let $L$ be the set of leaves in $T$, $\ell=|L|$ and $T'$ be the tree we obtain by deleting the vertices in $L$ from $T$. To begin with we will establish separate bounds in different cases.

\smallskip

Suppose first that there are two independent edges in $T'$. Replace each leaf in $T$ by linearly many copies of it to obtain $H$. Then $H$ has $\Omega(n^{\ell})$ 
copies of $T$. Now consider any proper edge-coloring of $H$ where the two independent edges of $T'$ receive the same color. Then any rainbow subgraph of $H$ has to avoid at least one of them, but then it must have fewer non-leaf edges (edges that are not incident to leaves) than $T$. This shows that $H$ is rainbow-$T$-free.

\smallskip

If $T'$ does not have two independent edges, then $T'$ is a star. Denote its center by $u$. As $T$ is not a star, nor a double star, $T'$ must have at least two leaves. Let $v$ be a leaf in $T'$ that has the smallest degree in $T$ and let $w$ be another leaf in $T'$. Next replace each leaf of $T$ not adjacent to $v$ by linearly many copies of it to obtain the graph $H'$. The number of copies of $T$ in $H'$ is $\Omega(n^m)$, where $m$ is the number of leaves in $T$ not adjacent to $v$. Note that we have $m\ge 2$ whenever $T$ is not $P_5$.  Consider now any proper edge-coloring of $H'$ where the edge $uw$ has the same color as the edge $vv'$ for some leaf neighbor $v'$ of $v$. Then any connected rainbow subgraph $F$ of $H'$ has to avoid at least one of the edges $uw$ and $vv'$. If $uw$ is not in $F$, then, as $H'$ is a tree, necessarily $u$ or $w$ is not in $F$, thus $F$ has fewer non-leaf vertices than $T$. If $vv'$ is not in $F$, then $F$ may have the same number of non-leaf vertices as $T$, but one of them will have smaller degree in $F$ than any of the non-leaf vertices in $T$. This shows that $H'$ is rainbow-$T$-free.

\smallskip

Next assume $T$ contains a \textit{bare} path $v_1v_2v_3v_4$, i.e., a path such that $v_2$ and $v_3$ have degree $2$ in $T$. Without loss of generality we may assume that the degree of $v_1$ is not $2$. Replace $v_2$ with vertices $u_1,\dots,u_b$ and $v_3$ with vertices $u_1',\dots, u_b'$ for some $b=\Theta(n)$ and connect $v_1$ to $u_i$, $u_i$ to $u_i'$ and $u_i'$ to $v_4$ for every $1\le i\le b$ to obtain the graph $G$. Then replace every vertex of degree $1$ in $G$ again by $b=\Theta(n)$ copies of itself to obtain the graph $G'$. Finally, consider the vertices of degree 2 in $T$ different from $v_2$ and $v_3$ whose degree in $G'$ is still $2$. We go through them in an order such that every vertex has at most one of its neighbors before it\footnote{It is a well-known fact that for forests such an ordering exists.} and we replace each of these vertices of degree 2 again with $b=\Theta(n)$ copies of themselves. In case the degree of some vertex $v$ under consideration becomes larger than two before we would arrive at it (because $v$ had a neighbor before it, and thus now has $\Theta(n)$ neighbors), we simply skip it. Let $G''$ denote the graph obtained this way and  consider any proper edge-coloring of $G''$ in which the edges $v_1u_i$ and $u_i'v_4$ have color $i$ for every $1\le i\le b$ and the edges $u_1u_1',u_2u_2',\dots,u_bu_b'$ have all the same color $b+1$. 

Observe that if $w\neq v_2,v_3$ has degree 2 in $T$ and when passing from $G'$ to $G''$ it is replaced with $w_1,\dots, w_b$, then in any subtree of $G''$, at most one of the $w_i$s can have degree $2$, as otherwise there would be a cycle of length $4$. Now let $Q$ be a copy of $T$ in $G''$. Clearly, every vertex of degree at least 2 in $Q$ is either a vertex of degree at least 2 in the original copy of $T$, or one of the vertices replacing it. We now distinguish two cases. First suppose that $Q$ contains at most one of the vertices $v_1$, $v_4$. Then, to be able to accommodate all the vertices of degree at least $2$ from $T$, it has to contain at least two  of the vertices $u_1,\dots,u_b,u_1',\dots,u_b'$ as vertices of degree $2$ in $T$. However, then it also has to contain at least two of the edges $u_1u_1',u_2u_2',\dots,u_bu_b'$, all of which have the same color. If $Q$ contains both vertices $v_1$, $v_4$, then, as $Q$ is connected, there has to be a path from $v_1$ and $v_4$, which has to be of the form $v_1u_iu_i'v_4$ for some $i$ and hence there are again two edges in $Q$ of the same color. In any case, $Q$ is not rainbow, hence $G''$ is rainbow $T$-free.

Let us now count the number of `canonical' copies of $T$ in $G''$: those which contain for every vertex $v$ of the original copy of $T$ either $v$, or one of the $\Theta(n)$ vertices it was replaced with. First of all, we have $\Omega(n)$ choices for the path $v_1v_2v_3v_4$. Then, as in $G'$ there are at least $\ell-1$ vertices of degree $1$ (we might have lost one when creating $G$) and each of them was replaced by linearly many copies when passing to $G'$, to choose the vertices for the leaves we have $\Omega(n^{\ell-1})$ options. Now, as $T$ has $\ell$ leaves, it has at most $\ell-2$ vertices of degree greater than 2 and hence at least $t-2\ell+2$ vertices of degree $2$. By the time we create $G'$, we can `lose' at most $\ell+2$ of these vertices, but we will still be left with at least $t-3\ell$ vertices of degree $2$ in $G'$. Let $T''$ be obtained from $T$ by contracting edges alongside vertices of degree 2, i.e., replacing every maximal bare path by a single edge. Then $T''$ has at most $2\ell-2$ vertices and hence at most $2\ell-3$ edges. Each such edge represents a maximal bare path in $T$ and, in particular, this means that $G'$ can contain also only at most $2\ell-3$ such maximal bare paths. Note that here we used our assumption on the degree of $v_1$, which ensures that the path $v_1v_2v_3v_4$ is at the end of a maximal bare path. Then, those at least $t-3\ell$ vertices which have degree $2$ both in $T$ and in $G'$ are divided into at most $2\ell-3$ paths. If such a path contains $i$ vertices of degree $2$, then (by possibly modifying the order in which the vertices are handled) when passing from $G'$ to $G''$ at least $\lceil i/2\rceil$ of them are going to be replaced by linearly many vertices in $G''$. Each such replacement gives us $\Omega(n)$ choices to pick the corresponding vertex, which together leaves us with $\Omega(n^{\left\lceil\frac{t-3\ell}{2}\right\rceil})$ choices. Summing up we get that the number of `canonical' copies of $T$ in $G''$ is $\Omega(n^{\left\lceil\frac{t-\ell}{2}\right\rceil})$. This bound is good when $\ell$ is small with respect to $t$. Otherwise, we can skip the last round above and only consider the choices for our bare path and the leaves. In this way we get $\Omega(n^\ell)$ `canonical' copies of $T$ in $G''$. These two bounds together give a lower bound of $\Omega(n^{\left\lceil t/3\right\rceil})$.

\smallskip

Now assume $T$ contains no bare path $v_1v_2v_3v_4$. We claim that in this case we have $t\leq 4\ell-3$. As earlier observed, if $T$ has $\ell$ leaves, then it has at most $\ell-2$ vertices of degree larger than $2$. Hence, to prove the desired inequality, it is enough to show that the number of vertices of degree $2$ is at most $2\ell-3$. In the previous paragraph we showed that $T$ can contain at most $2\ell-3$ maximal bare paths and, by our new assumption, a maximal bare path can contain at most one vertex of degree $2$. Therefore, the number of vertices of degree $2$ is indeed at most $2\ell-3$.

\smallskip

Now we are ready to put together the different bounds obtained so far. If $t>4\ell-3$, then by the above reasoning $T$ must contain a bare path on four vertices and hence $\ex(n,T,\textup{rainbow-}T)=\Omega(n^{\left\lceil t/3\right\rceil})$ as given by $G''$. If $t\le 4\ell-3$ but $T'$ is not a star, we have $\ex(n,T,\textup{rainbow-}T)=\Omega(n^\ell)=\Omega(n^{\left\lceil t/4 \right\rceil})$ as given by $H$. Finally, if $T'$ is a star we will consider $H'$. Note that by assumption $t\le 2\ell+1$ and recall that in $H'$ the vertex $v$ is a leaf of $T'$ that has the smallest degree in $T$ and $m$ is the number of leaves in $T$ that are not adjacent to $v$. Then we have $m\geq \left\lceil \ell/2\right\rceil$, with equality only if $T'$ has two leaves, in which case $t=\ell+3$.
If $m>\left\lceil l/2 \right\rceil$, or $m=\left\lceil l/2 \right\rceil$ and $t\le 2\ell$, then we have $m\geq \left\lceil t/4 \right\rceil$,
and hence  $\ex(n,T,\textup{rainbow-}T)=\Omega(n^m)=\Omega(n^{\left\lceil t/4 \right\rceil})$ as shown by $H'$. In the remaining case $m=\left\lceil l/2 \right\rceil$ and $t=2l+1$ we have $T=P_5$, which we have already dealt with. This completes the proof.
\end{proof}

Let us note that 
Theorem \ref{trees} can likely be improved. We remark that if $T$ contains two adjacent vertices of degree 2, our proof gives the lower bound $\Omega(n^{t/3})$. However, our main goal was to show that $\ex(n,T,\textup{rainbow-}T)$ grows fast with the number of vertices of $T$.

\smallskip

In the remainder of this section we prove some sporadic results about special forests.

\begin{proposition}\label{twostar} 
If $F$ is a forest consisting of two stars, then
\begin{equation*}
\ex(n,F,\textup{rainbow-}F)=\Theta(n^2).   
\end{equation*}
\end{proposition}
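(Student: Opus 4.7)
The plan is to get the lower bound $\Omega(n^2)$ for free from Proposition \ref{disco}---since $F=S_p\cup S_r$ has two components and is neither a star nor a triangle---and then focus the work on the matching upper bound $O(n^2)$. Throughout, I would assume without loss of generality that $p\le r$ and let $G$ be a properly edge-colored rainbow-$F$-free graph on $n$ vertices.

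The main tool I would develop is a star-building lemma in the spirit of Lemma~\ref{rp} and Proposition~\ref{doublestar}: if $u\ne v$ are distinct vertices with $\deg(u)\ge p+1$ (or $\deg(u)=p$ and $u\not\sim v$), and $\deg(v)\ge 2p+r+1$, then $G$ contains a rainbow copy of $F$. The proof is a direct greedy construction: first pick $p$ neighbors $a_1,\dots,a_p$ of $u$ avoiding $v$, and then at $v$ pick $r$ neighbors avoiding the $p+1$ used vertices together with the $p$ colors $c(ua_i)$ (each blocking at most one edge at $v$, so at most $2p+1$ total exclusions, leaving $\ge r$ options). Swapping the roles of $p$ and $r$ gives an analogous statement with $u$ playing the $S_r$-center and $v$ playing the $S_p$-center.

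Armed with this lemma, I would split on the maximum degree using the threshold $D=2r+p+1$. If $\Delta(G)<D$, then every vertex has constant degree, so picking the two centers in $n^2$ ways and their leaves in $O(1)$ ways each yields $O(n^2)$ copies of $F$, as desired. Otherwise, fix a vertex $w^*$ with $\deg(w^*)\ge D$. Applying the lemma and its symmetric version with $v=w^*$, every other vertex $u$ must satisfy $\deg(u)\le p$, and any $u\ne w^*$ with $\deg(u)=p$ must in addition be adjacent to $w^*$ (otherwise $\deg(u)=p$ and $u\not\sim w^*$ would still trigger the lemma).

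The main obstacle is the final step in the high-degree case: showing that under this structural dichotomy $G$ actually contains no copy of $F$ at all, so the count there is trivially zero. Consider any hypothetical copy with $S_p$ centered at $u$ and $S_r$ centered at $v$. If $v\ne w^*$, then $\deg(v)\le p$ combined with $\deg(v)\ge r\ge p$ forces $r=p$ and $v\sim w^*$, so the $r=p$ leaves of $S_r$ are exactly $N(v)$ and in particular include $w^*$; the same reasoning applied to $u$ (if also $u\ne w^*$) puts $w^*$ into the leaves of $S_p$ as well, so $w^*$ lies in both components, contradicting vertex-disjointness. The remaining sub-cases in which $u$ or $v$ equals $w^*$ are analogous: the non-$w^*$ center has degree exactly $p$ and is adjacent to $w^*$, so its star is forced to use $w^*$ as a leaf, conflicting with $w^*$ being the center of the other component. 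The care needed here is only in tracking which vertex plays which role, but once the forced adjacency $u\sim w^*$ is in place the disjointness contradiction closes the proof.
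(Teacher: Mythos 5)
Your proposal is correct and takes essentially the same route as the paper: the lower bound from Proposition \ref{disco}, a maximum-degree dichotomy at roughly $2p+r$, a greedy rainbow embedding of $S_p\cup S_r$ anchored at the high-degree vertex showing that case contributes no copies of $F$, and an $O(n^2)$ count when all degrees are bounded. The only difference is in the finish: the paper concludes more directly by observing that the high-degree vertex must lie in every copy of $S_p$ while $F$ contains two vertex-disjoint copies of $S_p$, whereas you reach the same ``$G$ is $F$-free'' conclusion via degree bounds on the remaining vertices and a short case analysis.
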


\begin{proof} The lower bound on $\ex(n,F,\textup{rainbow-}F)$ follows from Proposition \ref{disco}. 

For the upper bound let us denote the two stars in $F$ by $S_p$ and $S_r$, $p\le r$ and let $G$ be a properly edge-colored graph on $n$ vertices without a rainbow copy of $F$. 

Suppose first that $G$ contains a vertex $v$ of degree more than $2p+r$. We claim that this vertex $v$ has to be in every copy of $S_p$ in $G$. Indeed, assume to the contrary that $S$ is a copy of $S_p$ not containing $v$. Now at most $p+1$ neighbors of $v$ are in $S$ and at most $p$ vertices are connected to $v$ using a color from $S$. Thus we can find $r$ neighbors of $v$, which together with $v$ form a copy of $S_r$ and with $S$ this gives a rainbow copy of $F$ in $G$; a contradiction. So $v$ is contained in every copy of $S_p$. However $F$ itself contains two disjoint copies of $S_p$ which implies that $G$ is actually $F$-free.

Therefore, we may assume that every vertex in $G$ has degree at most $2p+r$. Then, when counting the number of copies of $F$ in $G$, there are at most $\binom{n}{2}$ ways to choose the two centers for the stars and at most $\binom{2p+r}{p}\binom{2p+r}{r}$ ways to choose the leaves afterwards. Together this shows that the number of copies of $F$ is indeed $O(n^2)$.
\end{proof}

We remark that if $F$ is made up of more stars we cannot hope for a bound that depends only on the number of stars. To see this, let $F$ consist of three components, two of which are single edges and one that is a star $S_r$, $r\geq 1$. Let $G$ be a graph with three components, two of which are also only single edges and one that is a star $S_{n-5}$. Consider a proper edge-coloring of $G$ where the two edge components have the same color. Then $G$ contains $\binom{n-5}{r}$ copies of $F$, but no rainbow copy. However, if all the components of $F$ are single edges, we can obtain the following.

\begin{proposition} 
Let $M_k$ be a matching with $k>1$ edges. Then 
\begin{equation*}
\ex(n,M_k,\textup{rainbow-}M_k)=\Theta(n^k).
\end{equation*}
\end{proposition}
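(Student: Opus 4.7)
The plan is to prove matching upper and lower bounds of order $n^k$. For the lower bound, I would use a one-line construction: take any matching $f_1,\dots,f_{\lfloor n/2\rfloor}$ on $n$ vertices and assign the single color $1$ to every $f_i$. Since the $f_i$ are pairwise vertex-disjoint this is automatically a proper edge coloring; only one color is used, so no rainbow copy of $M_k$ can appear; and the number of $k$-element submatchings of the constructed graph is $\binom{\lfloor n/2\rfloor}{k}=\Omega(n^k)$.

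For the upper bound, I would rely on the elementary observation that the number of copies of $M_k$ in any graph is at most $\binom{|E(G)|}{k}$. Hence it suffices to prove that every properly edge-colored rainbow-$M_k$-free graph $G$ on $n$ vertices satisfies $|E(G)|\le C_k\,n$ for some constant $C_k$ depending only on $k$, since then the number of $M_k$'s is at most $\binom{C_k n}{k}=O(n^k)$.

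To prove the edge bound, I would argue by contrapositive via a greedy construction of a rainbow $M_k$. Suppose one has already selected a partial rainbow matching $e_1,\dots,e_{i-1}$ with $i-1<k$, occupying a vertex set $V_i$ of size $2(i-1)$ and a color set $C_i$ of size $i-1$. An edge of $G$ fails to extend this partial matching only if it is incident to a vertex of $V_i$ or uses a color in $C_i$. The former kind are at most $2(i-1)\Delta(G)\le 2(k-1)(n-1)$; for the latter, the proper-coloring hypothesis forces each color class to itself be a matching, hence of size at most $n/2$, yielding a bound of $(k-1)\cdot n/2$. So fewer than $3kn$ edges are blocked in total, and $|E(G)|\ge 3kn$ would allow the greedy procedure to extend all the way up to a full rainbow $M_k$, contradicting rainbow-$M_k$-freeness. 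The argument is otherwise routine; the only place where real care is required is this color-class bound, which is what makes the proper-coloring hypothesis essential---without it a single color class could swallow all of $E(G)$ and the greedy step would fail immediately.
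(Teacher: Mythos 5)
Your proof is correct and structurally mirrors the paper's: a monochromatic matching for the lower bound, and a linear edge bound followed by $\binom{O(n)}{k}=O(n^k)$ for the upper bound. The difference is that where the paper simply cites Theorem~1 of Johnston--Palmer--Sarkar for the fact that a properly edge-colored rainbow-$M_k$-free graph has $O(n)$ edges, you supply a self-contained greedy argument: extend a partial rainbow matching one edge at a time, bounding the ``blocked'' edges by the at most $2(k-1)(n-1)$ edges meeting the used vertices plus the at most $(k-1)\cdot n/2$ edges in the used color classes (the latter bound being where the properness of the coloring is used, since each color class is a matching). Your numbers check out: the blocked count is at most $\tfrac{5}{2}(k-1)n<3kn$, so $|E(G)|\ge 3kn$ forces a rainbow $M_k$. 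Your lower-bound construction is also essentially what the paper's Proposition~\ref{disco} specializes to for $H=M_k$ (a single-color matching), just stated directly. In short, same route, but you have replaced an external citation with an elementary proof, which makes the argument self-contained at the cost of a worse implicit constant than what is in \cite{jps}.
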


\begin{proof} The lower bound on $\ex(n,M_k,\textup{rainbow-}M_k)$ follows from Proposition \ref{disco}. 

For the upper bound consider a properly edge-colored graph $G$ on $n$ vertices without a rainbow copy of $M_k$. Then according to \cite[Theorem 1]{jps} $G$ has $O(n)$ edges. To find a copy of $M_k$ we have to pick $k$ edges which can be done in $O(n^k)$ ways.
\end{proof}

\section{Concluding remarks and Open problems}
\label{sec:ConcludingOpen}

In this paper we determined the order of magnitude of $\ex(n,F,\textup{rainbow-}F)$ for paths and odd cycles and obtained bounds for even cycles and trees. Several interesting questions remain open. Below we mention a few of them.

\begin{itemize}
\item Let $K_r$ denote a ciique on $r$ vertices. A natural question is to determine the order of magnitude of $\ex(n,K_r,\textup{rainbow-}K_r)$ for $r \ge 4$. Proposition \ref{GenBounds}, part (ii) implies that $\ex(n,K_r,\textup{rainbow-}K_r) = o(n^{r-1})$. It is easy to see that $\ex(n,K_r,\textup{rainbow-}K_r) = \Omega(n^{r-2})$. Indeed, partition the $n$ vertices into $r$ parts $S_1, S_2, \ldots, S_r$ of roughly the same size, and take a matching $M_1$ between the parts $S_1$ and $S_2$ and take another matching $M_2$ between the parts $S_3$ and $S_4$ and the edges of both $M_1$ and $M_2$ are colored with the same color. Between every other pair of parts take a complete bipartite graph. It is easy to check that there are $\Omega(n^{r-2})$ copies of $K_r$ in this graph and in any copy of $K_r$ we must have an edge of $M_1$ and an edge of $M_2$, both of which are colored the same. So there is no rainbow copy of $K_r$.

\item What is the order of magnitude of $\ex(n,C_{2k},\textup{rainbow-}C_{2k})$? Theorem \ref{evencycles} proves some bounds on this function. Proposition \ref{c4} and Proposition \ref{c6} provide improved bounds in the case when $k = 2, 3$.

\item Theorem \ref{trees} shows that when $T$ is a tree, then with the exception of stars and double stars, $\ex(n,T,\textup{rainbow-}T)$ grows fast with the number of vertices if $T$ is a tree. We suspect that a similar phenomenon might be true for general graphs, with some small set of exceptions. One such exception we have encountered was the disjoint union of two stars. Another example is $T_p$, the triangle with $p$ leaves attached to one of its vertices. For this graph a simple case analysis shows that $\ex(n,T_p,\textup{rainbow-}T_p)=O(n)$.

\item In this paper we introduced the function $\ex(n,H,\textup{rainbow-}F)$ and studied it when $H = F$. It would be interesting to study the case when $H$ and $F$ are different graphs. 
\end{itemize}

\subsection*{Acknowledgements}

We thank Clara Shikhelman for proposing the problem studied in this paper at the Novi Sad Workshop on Foundations of Computer Science (NSFOCS) in July 2017. We are grateful to the organizers of the workshop for their hospitality and to Chris Dowden, Clara Shikhelman and Tuan Tran for fruitful discussions on the topic.

\end{document}